\newcommand{\R}{\mathbb{R}}
\newcommand{\N}{\mathbb{N}}
\newcommand{\Isom}{\mathrm{Isom}}
\newcommand{\aproof}{\begin{proof}}
\newcommand{\zproof}{\end{proof}\def\Hom{\operatorname{Hom}}}
\newcommand{\norm}[1]{\lVert#1\rVert}
\newcommand{\Norm}[1]{\big\lVert#1\big\rVert}
\newcommand{\triple}[1]{|\!|\!|#1|\!|\!|}
\newcommand{\TRIPLE}[1]{\Big|\!\Big|\!\Big|#1\Big|\!\Big|\!\Big|}
\def\Id{\operatorname{Id}}
\def\Hom{\operatorname{Hom}}
\theoremstyle{plain}
\newtheorem{teor}{Theorem}[section]
\newtheorem{prop}[teor]{Proposition}
\newtheorem{cor}[teor]{Corollary}
\newtheorem{defin}[teor]{Definition}
\newtheorem{example}[teor]{Example}
\newtheorem{ques}[teor]{Question}
\newtheorem{lemma}[teor]{Lemma}
\theoremstyle{definition}
{\theoremstyle{definition}\newtheorem{remark}[teor]{Remark}}
\begin{document}

\title[Light groups and invariant LUR renormings]{Light groups of isomorphisms of Banach spaces and invariant LUR renormings}

\author{L. Antunes}
\address{Departamento de Matem\'atica, Universidade Tecnol\'ogica Federal do Paran\'a, Campus Toledo \\
 85902-490 Toledo, PR \\ Brazil}
\email{leandroantunes@utfpr.edu.br}

\author{V. Ferenczi}
\address{Departamento de Matem\'atica, Instituto de Matem\'atica e Estat\'istica \\
Universidade de S\~ao Paulo\\ 05311-970 S\~ao Paulo, SP\\ Brazil}
\email{ferenczi@ime.usp.br}

\author{S. Grivaux}
\address{CNRS, Laboratoire Paul Painlev\'e, UMR 8524\\
Universit\'{e} de Lille\\
Cit\'e Scientifique, B\^atiment M2\\
59655 Villeneuve d'Ascq Cedex\\
France}
\email{sophie.grivaux@univ-lille.fr}

\author{C. Rosendal}
\address{Department of Mathematics, Statistics, and Computer Science (M/C 249)\\ 
University of Illinois at Chicago\\ 
851 S. Morgan St.\\ Chicago, IL 60607-7045\\ USA}
\email{rosendal.math@gmail.com }


\begin{abstract} Megrelishvili  defines in \cite{M} \emph{light groups} of isomorphisms of a Banach space as the groups on which the Weak and Strong Operator Topologies coincide, and proves that every bounded group of isomorphisms of Banach spaces with the Point of Continuity Property (PCP) is light. We investigate this concept for isomorphism groups $G$ of classical Banach spaces $X$ without the PCP, specially isometry groups, and relate it to the existence of $G$-invariant LUR or strictly convex renormings of $X$. 
\end{abstract}

\subjclass{22F50, 46B03}

\keywords{groups of isomorphisms of Banach spaces, isometry groups, light groups, renormings of Banach spaces, LUR renormings, invariant  renormings}

\date{\today}

\thanks{The first author was fully supported by UTFPR (Process 23064.004102/2015-40). The second author was supported by CNPq, grant 303034/2015-7 and Fapesp, grants 2013/11390-4 and 2016/25574-8. The third author was supported in part by
 the project FRONT of the French
National Research Agency (ANR-17-CE40-0021) and by the Labex CEMPI (ANR-11-LABX-0007-01). The fourth author was supported by the NSF (DMS \#1464974).}

\maketitle


\section{Introduction}

The general objective of this note is to determine conditions on a bounded group of isomorphisms of Banach spaces that ensure the existence of a locally uniformly rotund (LUR) renorming invariant under the action of this group. In particular, we will be interested in this context in the notion of lightness for such groups.

\subsection{Light groups}
A frequent problem in functional analysis is to determine under which conditions weak convergence and norm convergence coincide. 
For example, it is well-known that
 conditions of convexity of the norm of a Banach space ensure that weak and strong convergence are equivalent on its unit sphere. 
The corresponding problem for isomorphisms of Banach spaces (or more generally of locally convex spaces) was studied by Megrelishvili in \cite{M} in the context of group representations, using the concept of fragmentability. 
\par\smallskip
Given a (real) Banach space $X$, we denote by $\textrm{L}(X)$ the set of bounded linear operators on $X$, and by ${\textrm{GL}}(X)$ the group of bounded isomorphisms of $X$. We also denote by $\rm{Isom}(X)$ the group of surjective linear isometries of $X$. If $G$ is a subgroup of ${\textrm{GL}}(X)$, we write $G \leqslant {\textrm{GL}}(X)$. Recall that given a Banach space $X$, the Strong Operator Topology on $\textrm{L}(X)$ is the topology of pointwise convergence, i.e. the initial  topology generated by the family of functions $f_x\colon \textrm{L}(X) \to X$, $x \in X$, given by
$f_x(T) = Tx\textrm{, } T\in \textrm{L}(X),$
and the Weak Operator Topology on $\textrm{L}(X)$ is generated by the family of functions $f_{x,x^*}:\textrm{L}(X) \to \R$, $x \in X$, $x^* \in X^*$, given by
$f_{x,x^*}(T) = x^*(Tx) \textrm{, } T\in \textrm{L}(X).$ 
\par\smallskip
Megrelishvili gives the following definition. 

\begin{defin}[\cite{M}]
 A group $G \leqslant {\textrm{GL}}(X)$ of isomorphisms on a Banach space $X$ is \emph{light} if the Weak Operator Topology {(WOT)} and the Strong Operator Topology {(SOT)} coincide on $G$.
\end{defin}
Observe that since the two operator topologies are independent of the specific choice of norm on $X$, the same holds for lightness of $G$.

\par\smallskip
Well-known examples of light groups are the group $\textrm{U}(H)$ of unitary operators on a Hilbert space $H$. However, the main result of \cite{M} concerning light groups indicates that the result is more general. Recall that a Banach space $X$ has the Point of Continuity Property (PCP) if for every norm-closed non-empty bounded subset $C$ of $X$, the identity on $C$ has a point of continuity from the weak to the norm topology:

\begin{teor}[\cite{M}]
 If $X$ is a Banach space with the Point of Continuity Property {(PCP)} and if $G \leqslant {\emph{GL}}(X)$ is bounded in norm, then $G$ is light.
\end{teor}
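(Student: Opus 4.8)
The plan is to reduce everything to nets converging to the identity, and then to exploit the \emph{single} point of continuity furnished by the PCP together with the invertibility of the elements of $G$. Since norm convergence implies weak convergence, the SOT always refines the WOT on $\mathrm{L}(X)$, so it suffices to show that every WOT-convergent net in $G$ is SOT-convergent. Given a net $(T_\alpha)$ in $G$ with $T_\alpha \to T$ in the WOT, I would first observe that $T^{-1}T_\alpha \to \Id$ in the WOT: for fixed $x$ and $x^\ast$ one has $x^\ast(T^{-1}T_\alpha x) = \big((T^{-1})^\ast x^\ast\big)(T_\alpha x) \to \big((T^{-1})^\ast x^\ast\big)(Tx) = x^\ast(x)$, because $(T^{-1})^\ast x^\ast$ is a fixed functional. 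Conversely, SOT convergence of $(T^{-1}T_\alpha)$ to $\Id$ forces SOT convergence of $(T_\alpha)$ to $T$, since left multiplication by the fixed operator $T$ is norm-to-norm continuous. This reduces the problem to the case $T = \Id$.

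So I fix a net $(T_\alpha)$ in $G$ with $T_\alpha \to \Id$ in the WOT, and set $M = \sup_{S\in G}\|S\| < \infty$; note that $\|S^{-1}\| \le M$ for every $S \in G$ because $G$ is a group. Define $Y = \{x \in X : \|T_\alpha x - x\| \to 0\}$. A routine $3\varepsilon$-argument using the uniform bound $M$ shows that $Y$ is a norm-closed linear subspace of $X$, so the goal becomes $Y = X$.

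Next I fix $x \in X$ and put $C = \overline{Gx}$, the norm closure of the orbit. Then $C$ is nonempty (it contains $x$), bounded by $M\|x\|$, norm-closed, and $G$-invariant, since each $S \in G$ is a norm-homeomorphism of $X$ carrying $Gx$ onto $Gx$. Applying the PCP to $C$ yields a point $z_0 \in C$ at which the identity map from $(C,w)$ to $(C,\|\cdot\|)$ is continuous. Now $T_\alpha z_0 \to z_0$ weakly (this is the WOT convergence evaluated at the vector $z_0$), and every $T_\alpha z_0$ lies in $C$; continuity at $z_0$ therefore upgrades this to $\|T_\alpha z_0 - z_0\| \to 0$, i.e. $z_0 \in Y$. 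Moreover, the set of such continuity points is $G$-invariant: each $S \in G$ restricts to a homeomorphism of $C$ for both the weak and the norm topologies (with inverse $S^{-1}|_C$), and so maps continuity points to continuity points. Hence the whole orbit $Gz_0$ is contained in $Y$.

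The final step, where the group structure does the essential work, is to reach the arbitrary vector $x$ from the single continuity point $z_0$. Since $z_0 \in C = \overline{Gx}$, I choose $S_n \in G$ with $\|S_n x - z_0\| \to 0$; then $S_n^{-1}z_0 \in Gz_0 \subseteq Y$, and
\[
\|S_n^{-1}z_0 - x\| = \|S_n^{-1}(z_0 - S_n x)\| \le M\,\|z_0 - S_n x\| \to 0,
\]
using the uniform bound on the inverses. As $Y$ is norm-closed, $x \in Y$; since $x$ was arbitrary, $Y = X$. I expect the main obstacle to be conceptual rather than computational: the PCP supplies only \emph{one} point of continuity on $C$, so the crux is to transport good behaviour from that point to every vector, which is exactly what the $G$-invariance of continuity points combined with the uniformly bounded inverses achieves.
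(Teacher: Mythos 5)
Your proof is correct, and every step checks out: the translation trick $T^{-1}T_\alpha$ legitimately reduces to nets WOT-converging to $\Id$ (the limit $T$ lies in $G$, which is exactly what coincidence of the topologies \emph{on} $G$ requires, and $\|S^{-1}\|\le M$ for all $S\in G$ since $G$ is a group); the set $Y$ is indeed a closed subspace by the $3\varepsilon$-argument with the uniform bound; the orbit closure $C=\overline{Gx}$ is norm-closed, bounded and $G$-invariant, so the paper's formulation of the PCP applies to it directly; weak-to-norm continuity points of $\mathrm{id}_C$ form a $G$-invariant set because each $S\in G$ is simultaneously a weak-weak and norm-norm homeomorphism of $C$; and the final pull-back $\|S_n^{-1}z_0-x\|\le M\|z_0-S_nx\|\to 0$ correctly lands $x$ in the closed set $Y$. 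Note, however, that the paper does not prove this theorem at all: it is quoted from Megrelishvili \cite{M}, whose argument, as the introduction indicates, goes through the machinery of \emph{fragmentability} --- in a PCP space every bounded set is weak-to-norm fragmented, and lightness is deduced from a general result on fragmented (equicontinuous) group actions, a framework designed for locally convex spaces and group representations and which also underlies the orbitwise notions of \cite{M2}. Your argument is a genuinely different, more elementary route: it is entirely self-contained at the level of Banach spaces and norm-bounded groups, replacing the fragmentability apparatus by the single observation that one continuity point on an orbit closure can be transported everywhere by the group, at the price of not extending as stated to the more general locally convex/flow setting where Megrelishvili's method operates.
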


In particular, if $X$ has the Radon-Nikodym Property (RNP) (e.g. if $X$ is reflexive or is a separable dual space), then every bounded subgroup of ${\textrm{GL}}(X)$ is light. For example, the isometry group of $\ell_1$, $\Isom(\ell_1)$, is light.

\

We note here that in the literature (and indeed in \cite{M}) PCP sometimes appears as the formally weaker condition \textit{``every weakly-closed non-empty bounded subset has a weak-to-norm point of continuity for the identity''}.
However, as was pointed out to us by G. Godefroy, if $X$ satisfies this definition and $F$ is norm-closed and bounded, then any point of continuity of the weak closure $\overline{F}^w$ belongs to $F$, so the two definitions are equivalent. In fact, if $x \in \overline{F}^w$ is a weak-to-norm point of continuity for the identity, there exists a net $(x_{\alpha})_{\alpha \in I} \subset F$ such that $x_{\alpha} \stackrel{w}{\longrightarrow} x$. Hence, $x_{\alpha} \stackrel{\|\cdot\|}{\longrightarrow} x$ and, since $F$ is norm-closed, $x \in F$.

\subsection{Bounded non-light groups}
A natural question that arises from Megrelishvili's result is to investigate in which respect his result is optimal, and whether ``smallness'' assumptions on $G$ or weaker assumptions than the PCP on $X$ could imply that $G$ is light. We show (Theorem \ref{th45}) that any separable space containing an isomorphic copy of $c_0$ admits a bounded cyclic group of isomorphisms which is not light. This shows that we cannot really expect further general results in this direction.
\par\smallskip
Megrelishvili gives as example of a non-light group the group $\Isom(C([-1,1]^2))$. His proof uses a construction of Helmer \cite{Helmer} of a separately continuous group action on $[-1,1]^2$ that is not jointly continuous, and the equivalence of pointwise compactness and weak compactness of bounded subsets of $C(K)$. This leads to the following question:

\begin{ques}
 For which compact sets $K$ is the isometry group $\Isom(C(K))$ not light?
\end{ques}

We first prove (Proposition \ref{sophie}) that the isometry group of $c$, the space of real convergent sequences, is not light. Neither is the isometry group of $C(\{0,1\}^{\N})$ (Corollary \ref{CorPropo}). On the other hand, as a consequence of Theorem \ref{prop56}, we show that the isometry group of $C[0,1]$ is light, while those of the spaces $C([0,1]^n)$, $n\ge 2$, are not light. These constructions simplify the initial example of Megrelishvili.

\subsection{Light groups and LUR renormings}
In another direction, we study the relation between light groups and the existence of LUR renormings invariant under the action of the group.
 Recall that a norm $\|\cdot\|$ on $X$ is \emph{rotund} or \emph{strictly convex} if  whenever the vectors $x,y$ belong to the unit sphere $S_X$ of $X$ and $\|x+y\| = 2$, $x = y$. It is \emph{locally uniformly convex} (LUC) or \emph{locally uniformly rotund} (LUR) at a vector $x_0\in X$ if  whenever $(x_n)_{n\in\N}$ is a sequence of vectors of $X$ such that $\lim \|x_n\| = \|x_0\|$ and $\lim \|x_0 + x_n\| = 2\|x_0\|,$ $\lim \|x_n - x_0\| = 0$. Another equivalent definition (in fact, the original Lovaglia's definition) is the following: the norm is LUR at a vector $x_0\in S_X$ if for every $\varepsilon > 0$ there exists $\delta = \delta(\epsilon,x_0) > 0$ such that 
 $$ \dfrac{\|x+y\|}{2} \leq 1 - \delta \text{ whenever } \|x-y\| \geq \varepsilon \text{ and } \|y\| = 1.$$ The norm is said LUR in $X$ if it is LUR at every point $x_0 \neq 0$ of $X$ (or, equivalently, of $S_X$). The property of the dual norm $\|\cdot\|_*$ on $X^*$ being strictly convex or LUR is closely related to the differentiability of the norm $\|\cdot\|$ on $X$, in the senses of Fr\'echet and Gateaux respectively. All (real) separable Banach spaces admit an equivalent LUR renorming. For this and much more on renormings of Banach spaces, see \cite{DZ}. 
\par\smallskip
A fundamental result in the study of LUR renorming is the following theorem, due to Lancien (see \cite{L}):

\begin{teor}[\cite{L}]\label{th0}
\label{teorlancien}
If $X$ is a separable Banach space with the RNP, $X$ admits an isometry invariant LUR renorming. 
\end{teor}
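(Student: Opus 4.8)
The plan is to exploit the fact that, for separable spaces, the RNP is equivalent to \emph{dentability}: every bounded subset of $X$ admits, for each $\varepsilon>0$, slices of diameter less than $\varepsilon$. The crucial observation is that every surjective linear isometry $g$ of $X$ carries slices of the unit ball $B_X$ to slices of $B_X$ --- indeed $g$ maps the slice determined by $(x^*,\alpha)$ onto the slice determined by $((g^{-1})^*x^*,\alpha)$ --- and preserves their diameters. Consequently, any renorming procedure built solely from the norm $\|\cdot\|$ and from the combinatorics of its slices will be invariant under $\Isom(X)$ automatically. So invariance will come for free, provided the LUR renorming is produced by a \emph{canonical} construction using no arbitrary auxiliary data (such as a fixed basis or a fixed biorthogonal system).

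To build the LUR norm canonically I would use the dentability derivation of $B_X$: for $\varepsilon>0$ let $[B_X]'_\varepsilon$ be $B_X$ with the union of all its slices of diameter $<\varepsilon$ removed, and iterate transfinitely. Dentability guarantees that this derivation eventually reaches $\varnothing$, and separability forces the ordinal at which it does so to be countable. From this transfinite derivation one extracts, for each scale $\varepsilon_n=2^{-n}$, a canonical even convex continuous function $\sigma_n\colon X\to\R$, dominated by $\|\cdot\|$, that records at which stage a point is captured by a small slice. One then glues these scales together, in the standard way, into a single equivalent norm $\triple{\cdot}$, morally of the form $\triple{x}^2\approx\|x\|^2+\sum_{n\ge 1}4^{-n}\sigma_n(x)^2$, the first task being to check that this indeed defines an equivalent norm on $X$.

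The heart of the argument is the verification that $\triple{\cdot}$ is LUR. Suppose $\triple{x_k}\to\triple{x}$ and $\triple{x_k+x}\to 2\triple{x}$. Uniform convexity of $t\mapsto t^2$ together with the weak lower semicontinuity of each term then forces $\|x_k\|\to\|x\|$ and $\sigma_n(x_k)\to\sigma_n(x)$ for every $n$. The small-diameter slice property attached to the $\sigma_n$ is designed precisely so that this coordinatewise convergence traps the $x_k$, for $k$ large, inside a slice of diameter $<\varepsilon_n$ containing $x$, whence $\limsup_k\|x_k-x\|\le C\varepsilon_n$; letting $n\to\infty$ gives $\|x_k-x\|\to 0$. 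Since each $\sigma_n$ is defined entirely through slices and diameters of $B_X$, it satisfies $\sigma_n(gx)=\sigma_n(x)$ for all $g\in\Isom(X)$, so $\triple{gx}=\triple{x}$ and the norm is invariant.

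I expect the main obstacle to be the simultaneous fulfilment of the three demands on the functions $\sigma_n$: they must be (i) genuinely convex, continuous, and controlled by $\|\cdot\|$, so that $\triple{\cdot}$ is an equivalent norm; (ii) manifestly isometry invariant, which restricts one to slice- and diameter-based data; and (iii) strong enough that coordinatewise convergence of the $\sigma_n$ recovers norm convergence, which is what delivers LUR. Reconciling (ii) with (iii) --- extracting genuine rotundity from a construction forbidden to use any non-canonical choice --- is the delicate point, and it is exactly here that the transfinite derivation afforded by separability together with the RNP does the work.
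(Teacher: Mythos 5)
You should first note that the paper itself contains no proof of this statement: Theorem \ref{th0} is quoted directly from Lancien's article \cite{L}, so your attempt has to be measured against the argument given there. Measured against it, your strategy is essentially Lancien's own. The two pillars you identify are exactly his: (1) every $g\in\Isom(X)$ maps the slice of $B_X$ determined by $(x^*,\alpha)$ onto the slice determined by $((g^{-1})^*x^*,\alpha)$ and preserves diameters, so the slice derivation $B_X\mapsto [B_X]'_\varepsilon$ and all its transfinite iterates are canonical, closed, convex (each derived set is the intersection of the convex sets $B_X\setminus S$ over slices $S$ of diameter $<\varepsilon$) and $\Isom(X)$-invariant, whence any norm manufactured from them is invariant for free; and (2) the RNP makes the derivation reach $\varnothing$, while separability forces this to happen at a countable ordinal (a strictly decreasing transfinite chain of closed subsets of a separable metric space stabilizes at a countable stage), which is what makes the gluing over countably many scales and stages possible. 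Your LUR verification is also the standard one, except that weak lower semicontinuity plays no role: from $\triple{x_k}\to\triple{x}$ and $\triple{x_k+x}\to 2\triple{x}$ one gets $2\sigma_n(x)^2+2\sigma_n(x_k)^2-\sigma_n(x+x_k)^2\to 0$ for each weighted square term, and convexity of $\sigma_n$ alone then yields $\sigma_n(x_k)\to\sigma_n(x)$ (the classical fact on sums of squares of convex functions, cf. \cite[Ch. II]{DZ}).

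The one step that fails as literally written is your description of $\sigma_n$ as a convex continuous function ``that records at which stage a point is captured by a small slice''. A stage-recording function is constant on each layer $C^{\varepsilon}_{\beta}\setminus C^{\varepsilon}_{\beta+1}$ and jumps across the boundaries between layers, so it can be neither convex nor continuous; the real tension is thus between your demands (i) and (iii), not (ii) and (iii), and it is resolved in \cite{L} not by a single function per scale but by a countable weighted sum: enumerating the stages $\beta<\delta(\varepsilon_n)$, one sets $\sigma_n(x)^2=\sum_{\beta}c_{\beta}\,p_{\beta}(x)^2$, where $p_{\beta}$ is the Minkowski functional of a canonical fattening of the derived set $C^{\varepsilon_n}_{\beta}$ (fattening, e.g. replacing $C^{\varepsilon_n}_{\beta}$ by $C^{\varepsilon_n}_{\beta}+\rho B_X$, is needed because the derived sets may have empty interior, and it preserves both convexity and invariance). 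The stage of capture is then encoded only in aggregate, through the values of all the $p_{\beta}$, and the trapping argument runs as you intend: convergence $p_{\beta}(x_k)\to p_{\beta}(x)$ at the last stage $\beta$ at which $x$ survives pushes $x_k$, for large $k$, into a small neighbourhood of the slice of diameter $<\varepsilon_n$ that captures $x$, giving $\limsup_k\|x_k-x\|\lesssim\varepsilon_n$. With this replacement your outline is a faithful reconstruction of the proof in \cite{L}; as it stands, the existence of your $\sigma_n$ with properties (i)--(iii) is asserted rather than established, and establishing it is the entire technical content of Lancien's theorem.
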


If $G \leqslant {\textrm{GL}}(X,\|\cdot\|)$ is a bounded group of isomorphisms on $X$, the norm on $X$ defined by
$$
\triple{x} = \sup_{g \in G}\norm{gx},\;x\in X,
$$
is a $G$-invariant renorming of $X$. In other words, $G \leqslant \Isom(X,\triple\cdot)$. So a consequence of Lancien's Theorem \ref{th0} is that whenever $X$ is a separable space with the RNP and $G$ is a bounded group of isomorphisms on $X$, there exists a $G$-invariant LUR renorming of $X$. The existence of $G$-invariant LUR renormings for general groups of isomorphisms $G$ was first investigated by Ferenczi and Rosendal in \cite{FR}. In that paper, problems of maximal symmetry in Banach spaces were studied, analyzing the structure of subgroups of ${\textrm{GL}}(X)$ when $X$ is a separable reflexive Banach space. An example of a super-reflexive space with no maximal bounded group of isomorphisms was also exhibited in \cite{FR}. 
\par\smallskip
The relation between light groups and $G$-invariant LUR renormings is given by Theorem \ref{prop21}. We observe that if a Banach space $X$ admits a $G$-invariant LUR renorming, then $G$ is light. In fact, this is true even if the norm is LUR only on a dense subset of $S_X$. We also show that the converse assertion is false: although the isometry group of $C[0,1]$ is light, $C[0,1]$ admits no strictly convex isometry invariant renorming (Corollary \ref{reciproque}). This link between the existence of a $G$-invariant LUR renorming and the lightness of $G$ is a natural one: if $X$ is a Banach space with an LUR norm $\|\,.\,\|$, the weak topology and the norm topology coincide on the unit sphere of $(X, \|\,.\,\|)$.

\subsection{Light groups and distinguished families}
In \cite{FG}, Ferenczi and Galego investigated groups that may be seen as the group of isometries of a Banach space under some renorming. Among other results, they prove that if $X$ is a separable Banach space and $G$ is a finite group of isomorphisms of $X$ such that $-\Id \in G$, $X$ admits an equivalent norm $\triple\cdot$ such that $G = \Isom(X,\triple\cdot)$. They also prove that if $X$ is a separable Banach space with LUR norm $\|\cdot\|$ and if $G$ is an infinite countable bounded isometry group of $X$ such that $-\Id \in G$ and such that $G$ admits a point $x \in X$ with $\inf_{g \neq \Id}\|gx-x\|>0$, then $G = \Isom(X,\triple\cdot)$ for some equivalent norm $\triple\cdot$ on $X$. A point $x$ satisfying the condition
$$\inf_{g \neq \Id}\|gx-x\|>0$$
is called in \cite{FRdisplay} a \textit{distinguished point} of $X$ for the group $G$.
\par\smallskip
In \cite{FRdisplay}, Ferenczi and Rosendal generalized results of \cite{FG} to certain uncountable Polish groups, and also defined the concept of \emph{distinguished family} for the action of a group $G$ on a Banach space $X$. It is clear that if $G$ is an isometry group  with a distinguished point, $G$ is SOT-discrete.  However,  the following question remained open: if $G$ is an isomorphism group of $X$ which is SOT-discrete, should $X$  have a distinguished point for $G$? In Proposition \ref{prop5.1} we will see that the answer to this question is negative, and will give an example of an infinite countable group of isomorphisms $G$ of $c_0$ which is SOT-discrete but does not admit a distinguished point for $G$. In addition, this group is also not light.

\subsection{Light groups on quasi-normed spaces}
Although Megrelishvili has defined the concept of light group only for groups of isomorphisms on locally convex spaces, we can extend the definition to quasi-normed spaces, even if they are not locally convex. We finish this article by investigating whether the isometry groups of the quasi-normed spaces $\ell_p$ and $L_p[0,1]$, $0<p<1$, are light.

\section{LUR renormings and light groups}

Let $G$ be a bounded group of isomorphisms on a Banach space $(X,\norm\cdot)$. In this section we are interested in the existence of a $G$-invariant LUR renorming of $X$,  i.e. in the existence of an equivalent norm $\triple\cdot$ on $X$  which is both invariant under the action of $G$ and is LUR; or in the existence of a $G$-invariant \textit{dense LUR renorming}, meaning a renorming which is invariant under the action of $G$ and is LUR on a dense subset of $S_{X}$. When $G={\rm Isom}(X,\norm\cdot)$ we shall talk about \textit{isometry invariant} renormings. Our first result is the following:

\begin{prop} Let $X$ be a Banach space and let $G\leqslant {\emph{GL}}(X)$. If $G$ is SOT-compact and if $X$ admits an LUR renorming, $X$ admits a $G$-invariant LUR renorming.
\label{prop24}
\end{prop}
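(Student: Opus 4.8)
The plan is to take the given LUR renorming and average it over the SOT-compact group $G$ to produce a $G$-invariant renorming, then verify that averaging preserves the LUR property. Let $\|\cdot\|_0$ be an equivalent LUR norm on $X$ (which exists by hypothesis). The natural candidate for a $G$-invariant norm is

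$$\triple{x} = \sup_{g\in G}\norm{gx}_0,\quad x\in X.$$

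Since $G$ is bounded, this supremum is finite and defines an equivalent norm, and it is clearly $G$-invariant: for $h\in G$, $\triple{hx}=\sup_{g\in G}\norm{ghx}_0=\sup_{g'\in G}\norm{g'x}_0=\triple{x}$ because $g\mapsto gh$ is a bijection of $G$. So the hard part is not invariance but LUR-ness of $\triple\cdot$, and it is precisely here that the SOT-compactness of $G$ (rather than mere boundedness) must be used.

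The key technical point I would work out is the following. Suppose $(x_n)$ and $x_0$ satisfy $\triple{x_n}\to\triple{x_0}$ and $\triple{x_0+x_n}\to 2\triple{x_0}$; I want to deduce $\triple{x_n-x_0}\to 0$. For each $n$, choose (or approximate) $g_n\in G$ realizing the supremum defining $\triple{x_0+x_n}$, so that $\norm{g_n(x_0+x_n)}_0$ is close to $\triple{x_0+x_n}$. By SOT-compactness of $G$, I would pass to a subnet along which $g_n\to g$ in the SOT for some $g\in G$; the point of compactness is to guarantee that the approximating isometries do not escape and that $g_n x_0\to gx_0$ and $g_n x_n$ stays controlled in norm. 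Then, using $\norm{g_n x_0}_0\le\triple{x_0}$ and $\norm{g_n x_n}_0\le\triple{x_n}\to\triple{x_0}$ together with the hypothesis on $\norm{g_n(x_0+x_n)}_0$, the LUR-ness of $\|\cdot\|_0$ at the point $gx_0$ (using the convergence $g_nx_0 \to gx_0$) should force $\norm{g_n(x_n-x_0)}_0\to 0$ along the subnet, and hence (since $G$ is bounded, so $g_n^{-1}$ are uniformly bounded) $\norm{x_n-x_0}_0\to 0$ and $\triple{x_n-x_0}\to 0$. A subnet argument then upgrades this to convergence of the whole sequence.

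The main obstacle, and the step requiring genuine care, is managing the interplay between the two moving objects in the LUR condition: both the comparison vector $g_n x_n$ and the base point $g_n x_0$ change with $n$, whereas the definition of LUR fixes the base point $x_0$. I expect to need a slightly strengthened, uniform form of LUR-ness — that the modulus $\delta(\varepsilon,\cdot)$ behaves well as the base point varies over a compact set, or equivalently that LUR convergence is stable under the SOT-convergence $g_n x_0\to gx_0$ of base points. This is exactly where SOT-compactness does the work: it produces a single limiting base point $gx_0$ at which I can apply LUR-ness of $\|\cdot\|_0$, while continuity of the action lets me transfer the estimate back to the moving points $g_n x_0$. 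I would also need to confirm that the supremum in the definition of $\triple\cdot$ is attained (or nearly attained at a point whose SOT-limit exists), which again follows from SOT-compactness of $G$ together with the continuity of the evaluation maps $g\mapsto\norm{gx}_0$.
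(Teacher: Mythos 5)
Your proposal follows the paper's proof essentially verbatim: the same sup-norm $\triple{x}=\sup_{g\in G}\norm{gx}_0$, the same extraction via SOT-compactness of a limit $g$ of near-maximizers $g_n$ for $\triple{x_0+x_n}$, and the same application of LUR of $\norm{\cdot}_0$ at $gx_0$ followed by boundedness of $G$ to pull the convergence back. The only place you hedge --- anticipating a uniform LUR modulus over a compact set of base points --- is unnecessary: since $\norm{g_nx_0-gx_0}_0\to 0$, one replaces $g_nx_0$ by $gx_0$ inside the norm to get $\norm{g_nx_n+gx_0}_0\to 2$, deduces $\norm{g_nx_n}_0\to\norm{gx_0}_0$, and applies plain LUR at the single fixed point $gx_0$ to the normalized vectors $g_nx_n/\norm{g_nx_n}_0$, exactly as the paper does; your alternative formulation (``LUR convergence is stable under norm convergence of base points'') is precisely this observation and follows from ordinary LUR at the limit point, with no strengthened hypothesis.
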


\begin{proof} 
Suppose that $\norm\cdot$ is an equivalent LUR norm on $X$. The formula 
$$
\triple x=\sup_{T \in G}\norm{Tx},\quad x\in X,
$$ 
defines a $G$-invariant LUR renorming of $X$.
 Indeed, suppose that $x_n$, $n\in\N$, and $x$ are vectors of $X$ such that $\triple{x_n}=\triple x=1$ for every $n\in\N$ and $\lim\triple{x_n+x}=2$. Then we can find elements $T_n$, $n\in\N$, of $G$  such that $\lim \|T_n x_n + T_n x\|=2$. By SOT-compactness of $G$ we can assume without loss of generality that $T_n$ tends SOT to some element $T$ of $ G$, from which it follows that $\|T_n x_n +Tx\|$ converges to $2$. Since $\|T_n x_n\| \leq \triple{x_n}=1$ for every $n\in\N$ and $\|Tx\| \leq \triple{x} =1$, we deduce that $\|Tx\|=1$ and that $\|T_n x_n\|$ converges to $1$. In particular, if we set, for every $n\in\N$, $y_n=\frac{T_n x_n}{\|T_n x_n\|}$, then $y_n$ belongs to the unit sphere of $(X,\|\cdot\|)$ and \[\|y_n+Tx\| = \dfrac{1}{\|T_n x_n\|}\|T_n x_n + \|T_nx_n\| Tx \| \to 2.\] By the LUR property of $\norm\cdot$ at the point $Tx$, we deduce that $y_n$ converges to $Tx$. This means that $T_n x_n$ converges to $Tx$. So $\triple {x_n-x}=\triple{T_n x_n -T_n x}$ converges to $0$ since both $T_n x_n$ and $T_n x$ converge to $Tx$. This shows that $\triple\cdot$ is LUR.
\end{proof}

It is also worth mentioning that every SOT-compact group of isomorphisms is light:

\begin{prop}\label{SOT}
 Let $G\leqslant {\emph{GL}}(X)$ be a group of isomorphisms of a Banach space $X$. If $G$ is SOT-compact, then $G$ is light.
\end{prop}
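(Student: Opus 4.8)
The plan is to use the standard topological fact that a continuous bijection from a compact space onto a Hausdorff space is a homeomorphism. First I would recall that the SOT is always finer than the WOT on $\mathrm{L}(X)$: if $T_\alpha \to T$ in the SOT then $T_\alpha x \to Tx$ in norm for each $x \in X$, hence $x^*(T_\alpha x) \to x^*(Tx)$ for each $x^* \in X^*$, which is precisely WOT convergence. In particular the identity map $\iota\colon (G,\text{SOT}) \to (G,\text{WOT})$ is continuous, and to prove lightness it suffices to show that $\iota$ is in fact a homeomorphism, i.e. that its inverse is continuous as well.

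Next I would verify the two hypotheses needed for the topological lemma. The domain $(G,\text{SOT})$ is compact by hypothesis. For the codomain, I would check that the WOT is Hausdorff on $\mathrm{L}(X)$, and hence on $G$: if $S,T \in \mathrm{L}(X)$ satisfy $x^*(Sx) = x^*(Tx)$ for all $x \in X$ and all $x^* \in X^*$, then by the Hahn--Banach theorem $Sx = Tx$ for every $x \in X$, so $S = T$. Thus the defining family of functionals $f_{x,x^*}$ separates points of $\mathrm{L}(X)$, and the WOT is Hausdorff.

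With these two facts in hand, the lemma applies directly: $\iota$ is a continuous bijection from the compact space $(G,\text{SOT})$ onto the Hausdorff space $(G,\text{WOT})$, hence a homeomorphism. Equivalently, the inverse map $(G,\text{WOT}) \to (G,\text{SOT})$ is continuous, so the WOT is also finer than the SOT on $G$. Combined with the reverse refinement noted at the outset, the two topologies coincide on $G$, which is exactly the statement that $G$ is light.

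I do not expect a genuine obstacle here: this is a soft compactness argument, and the only points requiring care are the (entirely routine) verifications that the SOT refines the WOT and that the WOT is Hausdorff. The essential input is simply the SOT-compactness of $G$ together with the comparability of the two topologies. It is worth remarking that this proposition is consistent with, and in fact reproves for the isometry setting, the implication underlying Proposition~\ref{prop24}, since SOT-compactness is the common hypothesis.
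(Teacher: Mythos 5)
Your proof is correct and follows essentially the same route as the paper: the paper's argument is precisely that $G$ is WOT-compact (since the WOT is weaker than the SOT) and the WOT is Hausdorff, so the two topologies agree on $G$ --- the same compact-versus-Hausdorff comparison you phrase via the identity map $\iota\colon (G,\mathrm{SOT})\to(G,\mathrm{WOT})$. Your spelled-out verifications (SOT refines WOT, Hahn--Banach for the Hausdorff property) are exactly the routine facts the paper takes for granted.
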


\begin{proof}
The assumption implies that $G$ is also WOT compact, since the WOT is weaker than the SOT. However, the WOT
is also Hausdorff, and so the two topologies must agree on $G$. In other words, $G$ is light.
\end{proof}

In \cite{FRdisplay}, Ferenczi and Rosendal investigate LUR renormings in the context of transitivity of norms. Recall that a norm $\|\cdot \|$ on $X$ is called \textit{transitive} if the orbit $\mathcal{O}(x)$ of every point $x \in S_X$ under the action of the isometry group $\Isom(X)$ is the whole sphere $S_X$. If for every $x\in X$ the orbit $\mathcal{O}(x)$ is dense in $S_X$ we say that $\|\cdot\|$ is \textit{almost transitive}, and if the closed convex hull of $\mathcal{O}(x)$ is the unit ball $B_X$, we say that $\| \cdot \|$ is \textit{convex transitive}. Ferenczi and Rosendal proved that if an almost transitive norm on a Banach space is LUR at some point of the unit sphere,  it is uniformly convex. They also proved that if a convex transitive norm on a Banach space is LUR on a dense subset of the unit sphere, it is almost transitive and uniformly convex.
\par\smallskip
In the next Theorem, we exhibit a relation between the existence of LUR renormings and light groups.

\begin{teor}
\label{prop21} Let $G\leqslant {\emph{GL}}(X)$ be a group of isomorphisms of a Banach space $X$. If $X$ admits a $G$-invariant renorming which is LUR on a dense subset of $S_X$, then $G$ is light.
\end{teor}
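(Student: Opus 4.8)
The plan is to work throughout with the $G$-invariant norm rather than the original one. Since the Weak and Strong Operator Topologies, and hence the lightness of $G$, do not depend on the particular equivalent norm chosen on $X$ (as observed just after the definition of lightness), I may assume that the given $G$-invariant renorming, which I still denote $\triple\cdot$, is the norm on $X$; with respect to it every element of $G$ is an isometry, and $\triple\cdot$ is LUR at each point of a dense subset $D$ of the unit sphere $S_X$ of $(X,\triple\cdot)$. As the SOT is always finer than the WOT, the two topologies coincide on $G$ as soon as the identity map $(G,\text{WOT})\to(G,\text{SOT})$ is continuous; so it suffices to show that whenever a net $(g_\alpha)$ in $G$ converges to some $g\in G$ in the WOT, it converges to $g$ in the SOT, i.e. $\triple{g_\alpha x-gx}\to 0$ for every $x\in X$.

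First I would reduce to the case $g=\Id$. Setting $h_\alpha=g^{-1}g_\alpha\in G$, for every $x\in X$ and $x^*\in X^*$ one has $x^*(h_\alpha x)=\big((g^{-1})^*x^*\big)(g_\alpha x)\to\big((g^{-1})^*x^*\big)(gx)=x^*(x)$, so $h_\alpha\to\Id$ in the WOT; and since $g$ is an isometry, $\triple{g_\alpha x-gx}=\triple{g^{-1}g_\alpha x-x}=\triple{h_\alpha x-x}$ for every $x$, so $(g_\alpha)$ converges to $g$ in the SOT exactly when $(h_\alpha)$ converges to $\Id$ in the SOT. Thus it is enough to prove that $h_\alpha\to\Id$ in the WOT implies $h_\alpha\to\Id$ in the SOT. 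The point of this reduction is that it lets me apply the LUR property at the fixed vectors $x\in D$ themselves, which is precisely where the norm is known to be rotund (whereas the point $gx$ need not lie in $D$).

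The heart of the argument is then the classical fact that an LUR point of the sphere is a point where weak and norm convergence agree. Fix $x\in D$, so $\triple x=1$, and choose by Hahn--Banach a supporting functional $\phi\in X^*$ at $x$, that is, $\triple{\phi}_*=1$ and $\phi(x)=1$. Since $h_\alpha\to\Id$ in the WOT we have $\phi(h_\alpha x)\to\phi(x)=1$, whence $\phi(x+h_\alpha x)\to 2$; combined with $\phi(x+h_\alpha x)\leq\triple{x+h_\alpha x}\leq\triple x+\triple{h_\alpha x}=2$ (using that $h_\alpha$ is an isometry, so $\triple{h_\alpha x}=1$), this forces $\triple{x+h_\alpha x}\to 2$. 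The Lovaglia form of the LUR property at $x$ now yields $\triple{h_\alpha x-x}\to 0$: otherwise some $\varepsilon>0$ and a cofinal set of indices would satisfy $\triple{x-h_\alpha x}\geq\varepsilon$, forcing $\triple{x+h_\alpha x}/2\leq 1-\delta(\varepsilon,x)<1$ along that cofinal set and contradicting $\triple{x+h_\alpha x}\to 2$.

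Finally I would pass from $D$ to all of $X$ using the uniform bound coming from isometry. Given $x\in S_X$ and $\varepsilon>0$, pick $d\in D$ with $\triple{x-d}<\varepsilon$; then $\triple{h_\alpha x-x}\leq\triple{h_\alpha(x-d)}+\triple{h_\alpha d-d}+\triple{d-x}=2\triple{x-d}+\triple{h_\alpha d-d}\leq 2\varepsilon+\triple{h_\alpha d-d}$, so $\limsup_\alpha\triple{h_\alpha x-x}\leq 2\varepsilon$, and letting $\varepsilon\to 0$ gives $\triple{h_\alpha x-x}\to 0$; homogeneity then extends this to every $x\in X$. Hence $h_\alpha\to\Id$ in the SOT, which completes the proof. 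I expect the main subtlety to be the reduction to $g=\Id$ together with the correct handling of \emph{nets} (rather than sequences) in the LUR step, since $G$ need not be metrizable in either operator topology; by contrast, the density-and-equiboundedness extension and the norming-functional computation are routine.
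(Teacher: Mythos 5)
Your proof is correct and follows essentially the same route as the paper's: a norming functional at an LUR point of the sphere, WOT convergence forcing $\triple{h_\alpha x + x}\to 2$, and the LUR property then yielding norm convergence. The only differences are presentational—you make explicit the reduction to $g=\Id$ and the density-plus-isometry extension that the paper compresses into a ``without loss of generality,'' and you argue directly where the paper argues by contradiction.
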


\begin{proof} Let $\|\,.\,\|$ be a $G$-invariant renorming of $X$ which is LUR on a dense subset of $S_X$. Let $(T_{\alpha})$ be a net of elements of $G$ which converges WOT to the identity operator $\Id$ on $X$, and assume that $T_\alpha$ does not converge SOT to $\Id$. 
Let $x \in S_X$ be such that $T_\alpha x$ does not converge to $x$. Without loss of generality, we can suppose that the norm is LUR at $x$, and that there exists $\delta>0$ such that, for every $\alpha$, $\|T_\alpha x -x\| \geq \delta$. Since $\|T_{\alpha}x\| = \|x\| = 1$ for every $\alpha$, the LUR property forbids to have $\lim \|T_{\alpha}x+x\| = 2$. So we can assume that there exists $\varepsilon>0$ such that $\|T_\alpha x + x\| \le 2-\varepsilon $ for every $\alpha$. 

Let $\phi\in X^{*}$ with $\|\phi\|=1$ be such that $\phi(x)=1$. Since $T_{\alpha}$ converges WOT to $\Id$, $\phi(T_{\alpha}x) \to 1$. On the other hand,
$$2-\delta \geq \|T_{\alpha}x + x\| = \max_{\substack{\psi \in X^* \\ \|\psi\| = 1}}|\psi(T_{\alpha}x+x)| \geq |\phi(T_{\alpha}x) + 1|\quad\textrm{ for every }\alpha,$$
which contradicts the WOT convergence of $T_\alpha$ to $\Id$.
\end{proof}

\begin{remark}
\label{remark24}
In fact, the proof of Theorem \ref{prop21} gives us a formally stronger result: if $X$ admits a $G$-invariant renorming which is LUR on a dense subset of $S_X$ then $G$ is \textit{orbitwise light}. Megrelishvili defines in \cite{M2} a group $G \leqslant GL(X)$  as orbitwise light (or orbitwise Kadec) if for every $x \in X$ the orbit $\mathcal{O}(x) = \{Tx\textrm{ ; } T \in G\}$ is a set on which the weak and the strong topologies coincide. It is readily seen that if $G$ is orbitwise light, then it is light, but whether the converse holds is still an open question.
\end{remark}

\medskip
\section{Light Groups and Distinguished Points}

As recalled in the introduction,
Lancien proved in \cite{L} that if $X$ is separable with the RNP, $X$ admits an isometry invariant LUR renorming. Although separable spaces always admit LUR renormings, the generalization of Lancien's result to all separable spaces is false. For example, if $X=C([-1,1]^2)$ and $G=\Isom(X)$ then, since $G$ is not light \cite{M}, there is by Theorem \ref{prop21} no equivalent $G$-invariant (not even dense) LUR renorming. Another example mentioned in \cite{FRdisplay} is the case where $X=L_1[0,1]$ and $G=\Isom(L_1[0,1])$. In this case there is not even a strictly convex $G$-invariant renorming.
\par\smallskip
Here we discuss  conditions which clarify the relations between the two properties of a group $G\leqslant {\textrm{GL}}(X)$ being light and $X$  having a $G$-invariant LUR renorming, in the case when $G$ is SOT-discrete. The following notion was defined in \cite{FRdisplay}.

\begin{defin} Let $X$ be a Banach space, let $G\leqslant {\emph{GL}}(X)$ be a bounded group of isomorphisms of $X$, and let $x \in X$. We say that $x$ is  \emph{distinguished}  for $G$ (or for the action of $G$ on $X$) if $$\inf_{T \neq \Id}\|Tx-x\|>0.$$ If $\{x_1,\ldots,x_n\}$ is a finite family of vectors of $X$, then it is distinguished for $G$ if $$\inf_{T \neq \Id}\max_{1 \le i \le n} \|T x_i-x_i\|>0,$$ or, equivalently, if the $n$-tuple $(x_1,\ldots,x_n)$ is distinguished for the canonical action of $G$ on $X^n$.
\end{defin}

This notion does not depend on the choice of an equivalent norm on $X$. Note also that $G$ is SOT-discrete exactly when it admits a distinguished finite family of vectors. We also have, considering the adjoint action of $G$ on $X^*$:

\begin{lemma}\label{unlemme} Assume that $G \leqslant {\emph{GL}}(X)$ is light.
If $G$ acts as an SOT-discrete group on $X$, then $G$ acts as an SOT-discrete group on $X^*$.
\end{lemma}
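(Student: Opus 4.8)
The plan is to transfer a distinguished finite family of vectors in $X$ into a distinguished finite family of functionals in $X^*$, using lightness as the bridge between the strong and weak operator topologies. Recall that, as noted just before the statement, $G$ acts SOT-discretely on $X$ precisely when $\Id$ is isolated for the SOT on $G$; dually, $G$ will act SOT-discretely on $X^*$ exactly when $\Id$ is isolated for the strong operator topology induced by the adjoint maps $T \mapsto T^*$, i.e. when there is a finite family $\psi_1,\ldots,\psi_k \in X^*$ and $\delta>0$ with $\max_j \|T^*\psi_j - \psi_j\| \geq \delta$ for every $T \neq \Id$. So the goal reduces to producing such a family.

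First I would use the hypotheses to isolate $\Id$ in the \emph{weak} operator topology. Since $G$ acts SOT-discretely on $X$, the identity is SOT-isolated in $G$; since $G$ is light, the WOT and SOT coincide on $G$, so $\Id$ is also WOT-isolated. This is the crux, and it is exactly where lightness is indispensable: an SOT-isolating neighborhood only supplies vectors $x_i \in X$ with $\|Tx_i - x_i\|$ bounded below, and the Hahn--Banach functional witnessing such a lower bound depends on $T$, giving no fixed finite family of functionals. By contrast, a basic WOT-neighborhood of $\Id$ is built from finitely many functionals from the outset. Concretely, WOT-isolation yields finite families $x_1,\ldots,x_m \in X$ (which we may take nonzero) and $\phi_1,\ldots,\phi_k \in X^*$ and some $\varepsilon>0$ such that the basic neighborhood $\{T \in G : |\phi_j(Tx_i) - \phi_j(x_i)| < \varepsilon \ \textrm{for all } i,j\}$ meets $G$ only in $\Id$. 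Consequently, for every $T \in G$ with $T \neq \Id$ there are indices $i,j$ with $|\phi_j(Tx_i) - \phi_j(x_i)| \geq \varepsilon$.

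Finally I would dualize. Writing $\phi_j(Tx_i) - \phi_j(x_i) = (T^*\phi_j - \phi_j)(x_i)$, the estimate above gives $\|T^*\phi_j - \phi_j\|\,\|x_i\| \geq |(T^*\phi_j-\phi_j)(x_i)| \geq \varepsilon$, whence $\|T^*\phi_j - \phi_j\| \geq \varepsilon / \max_i \|x_i\|$. Therefore $\max_j \|T^*\phi_j - \phi_j\| \geq \varepsilon / \max_i\|x_i\| > 0$ for all $T \neq \Id$, so $\{\phi_1,\ldots,\phi_k\}$ is a distinguished family for the adjoint action and $G$ acts SOT-discretely on $X^*$. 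The only genuine obstacle is the one flagged above — obtaining a single finite set of functionals rather than a $T$-dependent one — and lightness removes it outright; the remaining dualization is routine, and the minor point that $T \mapsto T^*$ is an anti-homomorphism is harmless since the distinguished-family condition is phrased directly over $T \neq \Id$.
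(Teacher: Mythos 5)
Your proof is correct and follows essentially the same route as the paper's: lightness upgrades SOT-discreteness of $G$ to WOT-discreteness, and the finitely many functionals appearing in a basic WOT-neighborhood isolating $\Id$ are then shown, by the evaluation estimate $|(T^*\phi_j-\phi_j)(x_i)|\geq\varepsilon$, to form a distinguished family for the adjoint action. The only cosmetic difference is that you work with $T\mapsto T^*$ while the paper uses the homomorphism $\psi(T)=(T^{-1})^*$, which, as you correctly note, is immaterial since $G$ is a group.
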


\begin{proof}
Define $\psi: G \to {\textrm{GL}}(X^*)$ by setting $\psi(T)(x^*) = x^* \circ T^{-1}$ for every $T \in G$ and $x^* \in X^*$. We want to show that $\psi(G)$ is an SOT-discrete subgroup of ${\textrm{GL}}(X^*)$. For this it suffices to show the existence of $\varepsilon>0$ and $x_1^*, \dots, x_n^* \in S_{X^*}$ such that the only element $T$ of $G$
such that $\|\psi(T)(x_i^*)-x_i^*\|<\varepsilon$ for every $1\le i\le n$ is the identity operator $\Id_X$ on $X$.
 Since $G$ is light and acts as an SOT-discrete group on $X$,  it is WOT-discrete. So there exist $\varepsilon > 0$, $x_1, \dots, x_m \in S_X$ and $x_1^*, \dots, x_n^* \in S_{X^*}$ such that the only element $T$ of $G$
such that
$|x_i^*(T^{-1}x_j-x_j)|<\varepsilon$ for every $1\le i\le n$ and $1\le j\le m$ is $T=\Id_X$. The conclusion follows immediately.
\end{proof}

\begin{lemma}
Let $X$ be a Banach space, let $G$ be a bounded subgroup of ${\emph{GL}}(X)$, and let $\{x_1,\ldots,x_n\}$ be a distinguished family of vectors for the action of $G$ on $X$. Let $\|\cdot\|$ be a $G$-invariant norm on $X$ which is LUR at $x_i$ for  every $1\le i\le n$. For
any functional $x_i^* \in S_{X^*,\|\cdot\|^*}$ such that $x_i^* (x_i) = \|x_i\|$ for every $1\le i\le n$, the family $\{x_1^* ,\ldots,x_n^* \}$ is distinguished for the action of
$G$ on $X^*$.
\end{lemma}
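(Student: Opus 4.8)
The plan is to argue by contradiction, transferring a failure of distinguishedness on $X^*$ back to a failure on $X$ through the local uniform rotundity of $\|\cdot\|$ at the points $x_i$. Write $\psi(T)(x^*)=x^*\circ T^{-1}$ for the adjoint action of $G$ on $X^*$, as in Lemma~\ref{unlemme}, and suppose towards a contradiction that $\{x_1^*,\dots,x_n^*\}$ is \emph{not} distinguished for this action. Then there is a sequence $(T_k)$ of elements of $G$, all different from $\Id$, such that $\max_{1\le i\le n}\|x_i^*\circ T_k^{-1}-x_i^*\|^*\to 0$ as $k\to\infty$. The goal is to show that this forces $\max_{1\le i\le n}\|T_k x_i-x_i\|\to 0$, which contradicts the assumption that $\{x_1,\dots,x_n\}$ is distinguished for $G$ on $X$ (recall that $T_k\neq\Id$).

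First I would record the consequences of the $G$-invariance of $\|\cdot\|$. Since $G\leqslant\Isom(X,\|\cdot\|)$, each $T\in G$ is a surjective isometry, so $\|T_k x_i\|=\|x_i\|$ for every $k$ and $i$, and the dual norm is likewise $G$-invariant, whence $\|x_i^*\circ T_k^{-1}\|^*=\|x_i^*\|^*=1$. The key observation is the identity $(x_i^*\circ T_k^{-1})(T_k x_i)=x_i^*(x_i)=\|x_i\|$. Combining it with the convergence $\|x_i^*\circ T_k^{-1}-x_i^*\|^*\to0$ and the uniform bound $\|T_k x_i\|=\|x_i\|$ gives, for each fixed $i$,
\[\bigl|x_i^*(T_k x_i)-\|x_i\|\bigr|=\bigl|(x_i^*-x_i^*\circ T_k^{-1})(T_k x_i)\bigr|\le \|x_i^*\circ T_k^{-1}-x_i^*\|^*\,\|x_i\|\longrightarrow 0,\]
so that $x_i^*(T_k x_i)\to\|x_i\|$.

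With this in hand, the final step I would take applies the LUR property at $x_i$ to the sequence $y_k:=T_k x_i$. On one hand $\|y_k\|=\|x_i\|$ for all $k$; on the other hand, since $\|x_i^*\|^*=1$ and $x_i^*(x_i)=\|x_i\|$,
\[2\|x_i\|\ge \|x_i+y_k\|\ge x_i^*(x_i+y_k)=\|x_i\|+x_i^*(T_k x_i)\longrightarrow 2\|x_i\|,\]
so $\|x_i+y_k\|\to 2\|x_i\|$. By the definition of LUR at the point $x_i$ this yields $\|y_k-x_i\|=\|T_k x_i-x_i\|\to0$. As this holds for each of the finitely many indices $i$, we obtain $\max_i\|T_k x_i-x_i\|\to0$ with $T_k\neq\Id$, the desired contradiction.

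The argument is essentially routine once the right bridge is found; the only point requiring care---and what I expect to be the crux---is the passage from the dual-side hypothesis to the scalar convergence $x_i^*(T_k x_i)\to\|x_i\|$. This rests on two facts that must be used together: the invariance identity $(x_i^*\circ T_k^{-1})(T_k x_i)=x_i^*(x_i)$, and the $G$-invariance of the norm, which keeps $\|T_k x_i\|$ constant so that the error term $(x_i^*-x_i^*\circ T_k^{-1})(T_k x_i)$ is controlled by $\|x_i^*\circ T_k^{-1}-x_i^*\|^*$. After that, the LUR property is applied in its standard form, exactly as in the proof of Theorem~\ref{prop21}.
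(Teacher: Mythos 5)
Your proof is correct and follows essentially the same route as the paper's: both hinge on pairing the norming functional $x_i^*$ with the orbit of $x_i$, using the $G$-invariance of the norm to keep $\|T x_i\|$ and the relevant dual evaluations under control, and then invoking the LUR property at $x_i$ to convert smallness of the dual displacement into smallness of $\|Tx_i-x_i\|$. The only difference is presentational: you argue by contradiction with the sequential definition of LUR (applied to $T_k x_i$), whereas the paper argues directly with the uniform $\varepsilon$--$\delta$ (Lovaglia) form of LUR applied to $T^{-1}x_i$, which additionally yields an explicit uniform lower bound $\varepsilon(\alpha)$, independent of $T$ and $i$, for the dual displacement.
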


\begin{proof}  Assume that $\|x_i\|=1$ for  every $1\le i\le n$. Let $\alpha=\inf_{T \neq \Id_X} \max_{1\le i\le n} \|Tx_i-x_i\|>0$. For every $T \neq \Id_X$, choose $1\le i\le n$ such that $\|T^{-1}x_i-x_i\| \geq \alpha$.
By the LUR property of the norm at $x_i$, there exists $\varepsilon>0$ depending on $\alpha$ but not on $i$ such that $\|T^{-1}x_i+x_i\| \leqslant 2-\varepsilon$. So $x_i^* (T^{-1}x_i) \leqslant 1-\varepsilon$.
From this it follows, using the notation introduced in the proof of Lemma \ref{unlemme} above, that $\psi(T)(x_i^*)(x_i) -x_i^*(x_i) \leqslant -\varepsilon$,
so that $\|\psi(T)(x_i^*)-x_i^*\| \geq \varepsilon$. This being true for every $T \neq \Id_X$, $\{x_1^* ,\ldots,x_n^* \}$ is distinguished for the action of
$G$ on $X^*$.
\end{proof}

As a direct corollary, we obtain:

\begin{cor} Let $X$ be a Banach space, let $G\leqslant {\emph{GL}}(X)$ be SOT-discrete, and assume that $X$ admits a $G$-invariant dense LUR renorming. If there exists a distinguished family of cardinality $n$ for the action of $G$ on $X$,  there also exists a  distinguished family of cardinality $n$ for the action of $G$ on $X^*$.
\end{cor}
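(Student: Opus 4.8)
The plan is to reduce the Corollary to the preceding Lemma, whose only delicate hypothesis is that the $G$-invariant norm be LUR \emph{exactly} at the members of the distinguished family. Since the hypothesis here only furnishes a $G$-invariant norm $\|\cdot\|$ that is LUR on a \emph{dense} subset $D$ of $S_X$, the given distinguished family $\{x_1,\dots,x_n\}$ need not consist of LUR points, and the real work lies in replacing it by a nearby distinguished family sitting inside $D$. Throughout I would work with the $G$-invariant norm $\|\cdot\|$ itself: distinguishedness does not depend on the choice of equivalent norm, and for this norm every $T\in G$ is an isometry, which is precisely what makes the perturbation below cheap.

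First I would normalize. Since distinguishedness is preserved when each $x_i$ is replaced by a positive multiple $c_ix_i$ (the relevant infimum over $T\neq\Id$ is only multiplied by $\min_i c_i>0$), I may assume $\|x_i\|=1$ for every $i$. Set $\alpha=\inf_{T\neq\Id}\max_{1\le i\le n}\|Tx_i-x_i\|>0$.

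Next comes the key step, the density-plus-perturbation argument. By density of $D$ in $S_X$, choose $y_i\in D$ with $\|y_i-x_i\|<\alpha/4$ for each $i$; then $\|\cdot\|$ is LUR at each $y_i$. I claim $\{y_1,\dots,y_n\}$ is still distinguished. Indeed, fix $T\neq\Id$ and pick an index $i_0$ with $\|Tx_{i_0}-x_{i_0}\|=\max_i\|Tx_i-x_i\|\ge\alpha$. Because $T$ is a $\|\cdot\|$-isometry, $\|T(x_{i_0}-y_{i_0})\|=\|x_{i_0}-y_{i_0}\|$, so the triangle inequality gives $\|Ty_{i_0}-y_{i_0}\|\ge\|Tx_{i_0}-x_{i_0}\|-2\|x_{i_0}-y_{i_0}\|>\alpha-\alpha/2=\alpha/2$. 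Hence $\max_i\|Ty_i-y_i\|>\alpha/2$ for every $T\neq\Id$, proving the claim.

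Finally I would invoke the Lemma directly: $\{y_1,\dots,y_n\}$ is a distinguished family at whose members the $G$-invariant norm $\|\cdot\|$ is LUR, so choosing $y_i^*\in S_{X^*}$ with $y_i^*(y_i)=\|y_i\|$ produces a distinguished family $\{y_1^*,\dots,y_n^*\}$ of cardinality $n$ for the action of $G$ on $X^*$, as required. The only genuine obstacle is the mismatch between \emph{LUR at the prescribed points} and \emph{LUR on a dense set}; once one observes that the isometric action of $G$ lets a small norm-perturbation preserve distinguishedness, the statement follows as an immediate application of the Lemma.
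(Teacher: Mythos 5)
Your proof is correct and follows exactly the route the paper intends: the paper states this as a direct corollary of the preceding Lemma, leaving implicit precisely the step you supply, namely perturbing the distinguished family $\{x_1,\dots,x_n\}$ to nearby points of the dense set of LUR points, with distinguishedness preserved (with constant $\alpha/2$) because every $T\in G$ is an isometry for the $G$-invariant norm. Your triangle-inequality estimate and the final appeal to the Lemma via Hahn--Banach support functionals are exactly right, so nothing is missing.
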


\medskip

\section{Bounded groups which are not light}
\label{lightisometrygroups}

Isometry groups are especially relevant to our study. We introduce the following definition:

\begin{defin} A Banach space $X$ is \emph{light} if ${\rm Isom}(X)$ is a light subgroup of ${\emph{GL}}(X)$. \end{defin}

Observe that since the isometry group of a Banach space $(X,\|\,.\,\|)$ is not invariant by equivalent renorming, the notion of lightness for a Banach space depends very much on the choice of the norm.
In our terminology, Megrelishvili proves in \cite{M} that all spaces with the  PCP are light but that $C([0,1]^2)$ is not light. Also, we have the following example:

\begin{example} \label{faitc_0}
The space $c_0$ is light.
\end{example}

In fact, every isometry $T$ of $c_0$ (endowed with the usual supremum norm) has the form $$T((x_k)_{k\in\N})=(\varepsilon_k x_{\sigma(k)})_{k\in \N},\quad (x_k)_{k\in \N}\in c_0,$$ where $(\varepsilon_k)_{k\in\N} \in \{-1,1\}^\N$ and $\sigma$ is a permutation of $\N$. Denote for every $i\in\N$ by  $\varphi_i $  the $i^{th}$ coordinate functional on $c_0$. Let $(T_{\alpha})$ be a net in $\Isom(c_0)$, such that $T_{\alpha} \stackrel{\text{WOT}}{\longrightarrow} \Id$. Write each 
$T_\alpha$ as
\[
T_{\alpha}((x_k)_{k\in\N})=(\varepsilon_{\alpha,k} x_{\sigma_{\alpha}(k)})_{k\in \N},\quad (x_k)_{k\in \N}\in c_0,
\]
with
$(\varepsilon_{\alpha,k})_{k\in\N} \in \{-1,1\}^\N$ and $\sigma_\alpha$ is a permutation of $\N$.
we have for every $x \in c_0$ and every $i \in \N$,
$$|\varphi_i(T_{\alpha}x) - \varphi_i(x)| = |\varepsilon_{\alpha,i} x_{\sigma_{\alpha}(i)} - x_i| \to 0.$$
Since this holds for every $x $ belonging to the space $c_{00}$ of finitely supported sequences,  we must eventually have $\sigma_{\alpha}(i) = i$ and $\varepsilon_{\alpha,i} = 1$. Hence $\|T_{\alpha}x-x\| \to 0$ for every $x \in c_{00}$, and by density of $c_{00} $ in $ c_0$, $\|T_{\alpha}x-x\| \to 0$  for every $x \in c_0$.

\par\medskip
Another proof of Example \ref{faitc_0} is based on the observation that $c_0$ admits a particular LUR renorming, namely the Day's renorming given by
\[\|x\|_D = \sup \left\{\left(\sum_{k=1}^n\dfrac{x^2_{\sigma(k)}}{4^k}\right)^{\frac{1}{2}}\right\},\quad x\in c_0,\]
where the supremum is taken over all $n \in \N$ and all permutations $\sigma$ of $\N$ (see \cite[p. 69]{DZ}). Since this renorming is isometry invariant, it follows from Theorem \ref{prop21} that $c_0$ is light.
\par\smallskip
Note that the Day's renorming is actually defined on $\ell_\infty$, and therefore on the space $c$ of convergent real sequences. In view of Proposition \ref{propc} below, it may be amusing to observe that Day's renorming is not strictly convex on $c$ (not even on a dense subset of $S_c$). In fact, it is not strictly convex at the point $(1,1,\dots)$ since for every $x = (x_k)_{k\in\N} \in c$ such that $\|x\|_{\infty} = 1$ and $|x_k| = 1$ for infinitely many indices $k$, we have $\|x\|_D = \|(1,1,\dots)\|_D$.
\par\smallskip
We now provide an elementary example of a space which is not light. 

\begin{prop}\label{propc}
\label{sophie}
There exists a subgroup $G \leqslant \Isom(c)$ which has a distinguished point, but whose dual action on $\ell_1$ is not SOT-discrete. In particular the space $c$ is not light.
\end{prop}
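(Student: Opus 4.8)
The plan is to construct an explicit isometry group $G$ of $c$ for which lightness fails via the dual action. Recall that the isometry group of $c$ is well understood: surjective linear isometries of $c$ can be described through their action on the space, and $c$ is isometric to a hyperplane-type situation where $\ell_1 = c^*$. A natural family to exploit is built from sign-changes and permutations that fix the limit functional. Concretely, I would look for isometries $T_n$ of $c$ that each move some fixed vector by a definite amount (thereby providing a distinguished point for the generated group $G$), while on the dual side $\ell_1$ the adjoint operators $T_n^*$ approach the identity in the SOT. The cleanest candidates are operators acting by changing finitely many signs or permuting finitely many coordinates far out along the sequence: such a perturbation keeps a fixed anchor vector (e.g. the constant sequence $(1,1,\dots)$ or a specific finite combination) displaced by a fixed amount, yet on $\ell_1$ the relevant coordinates carry vanishing mass.

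First I would pin down the concrete isometries. Using the standard form of $\Isom(c)$, I would select a sequence $(T_n)$ where $T_n$ acts nontrivially only on the $n$th coordinate (for instance, flipping a sign at position $n$ together with a fixed compensating adjustment to preserve the convergent-sequence structure and norm), and let $G$ be the group these generate. I would then verify two things. For the distinguished point: I would exhibit a single vector $x \in c$ with $\inf_{T \neq \Id}\|Tx - x\| > 0$, which amounts to checking that every nontrivial element of $G$ moves $x$ by a uniformly bounded-below amount — easiest if $x$ has all coordinates equal to $1$, so that any sign flip produces a jump of size $2$ in the sup norm. This establishes the distinguished point, hence $G$ is SOT-discrete by the remark following the definition of distinguished families.

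For the failure on $\ell_1$: I would compute the adjoint operators $\psi(T_n) = (T_n^{-1})^*$ acting on $c^* \cong \ell_1$ and show that for every fixed $x^* \in \ell_1$ one has $\|\psi(T_n)(x^*) - x^*\| \to 0$. Since $T_n$ perturbs only coordinate $n$, its adjoint perturbs only the dual weight at position $n$ (and possibly the limit functional's component), and because any $x^* \in \ell_1$ has summable coordinates, the tail masses go to zero, giving SOT-convergence of $\psi(T_n)$ to the identity on $\ell_1$ without $\psi(T_n)$ being eventually trivial. This shows the dual action of $G$ on $\ell_1$ is not SOT-discrete. To conclude that $c$ is not light, I would invoke Lemma \ref{unlemme} in contrapositive form: if $\Isom(c)$ — and a fortiori the subgroup $G$ — were light, then SOT-discreteness on $c$ would force SOT-discreteness of the dual action on $c^* = \ell_1$, contradicting what we just proved.

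The main obstacle I anticipate is handling the subtlety of $\Isom(c)$ correctly: unlike $c_0$, the space $c$ has a distinguished limit functional, so its isometries are not simply signed permutations but must interact appropriately with the constant sequences and the convergence condition. Getting the precise form of the $T_n$ (and especially their adjoints on $\ell_1$, which must correctly encode the limit functional as the ``coordinate at infinity'') is where the real care is needed; once the correct family is in hand, both the distinguished-point verification and the dual SOT-convergence are short computations, and the non-lightness follows formally from Lemma \ref{unlemme}.
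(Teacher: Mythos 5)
Your proposal is correct and is essentially the paper's own proof: the same single-coordinate sign-flip isometries $T_n$ of $c$ (the paper takes $G$ to be all eventually-constant sign changes), the constant sequence $(1,1,\dots)$ as distinguished point, and the same summability computation $\|T_n^*\varphi-\varphi\|_{\ell_1}\le 2|y_n|\to 0$ showing the dual action on $\ell_1$ is not SOT-discrete. The only cosmetic differences are that no ``compensating adjustment'' is needed (flipping one sign of a convergent sequence preserves convergence and the sup norm, so $T_n$ is already an isometry of $c$), and that the paper concludes non-lightness directly by checking $T_n \stackrel{\text{WOT}}{\longrightarrow} \Id$ while $T_n \stackrel{\text{SOT}}{\centernot\longrightarrow} \Id$, whereas you deduce it via the contrapositive of Lemma \ref{unlemme} --- both routes are valid.
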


\begin{proof}
Define $G$ as the subgroup of isometries $T$ of $c$ of the  form $$T((x_k)_{k\in\N})=(\varepsilon_k x_{k})_{k\in \N},\quad (x_k)_{k\in \N}\in c,$$ where the sequence $(\varepsilon_k)_k \in \{-1,1\}^\N$ is eventually constant. One easily sees that $(1,1,\ldots)$ is a distinguished point for $G$. On the other hand, the dual space of $c$ identifies isomorphically with $\ell_1$, where $\varphi=(y_k)_{k\in\N} \in \ell_1$ acts on an element $x = (x_k)_{k\in\N} \in c$  by the formula \[
\varphi(x) = y_1\lim_{k \to \infty}x_k + \sum_{k=2}^{\infty}y_k\,x_{k-1}.                                                                                                                                                                                                                                                                                                                                                                                                                                                                                            
\]
For every $n \in\N$, define the operator $T_n \in G$ by setting, for every $(x_k)_{k\in \N}\in c$,
 \[
  T_n(x_1,x_2,\dots, x_{n-1},x_n,x_{n+1},\dots) = (x_1,x_2,\dots, x_{n-1},-x_n,x_{n+1},\dots).
 \]
Obviously $T_n \stackrel{\text{SOT}}{\centernot \longrightarrow} \Id$, but
 for every $n \in\N$ and every $x\in c$ we have
\[
   \varphi(T_n(x))  = y_1\lim_{k \to \infty}(T_n(x))_k + \sum_{k=2}^{\infty}y_k\,(T_n(x))_{k-1} =\left( y_1\lim_{k \to \infty}x_k + \sum_{k=2}^{\infty}y_k\,x_{k-1}\right) - 2y_nx_n 
   \]
 which tends to $\varphi(x)$ as $n$ tends to infinity.
 Hence $T_n \stackrel{\text{WOT}}{\longrightarrow} \Id$ and $G$ is not light, which implies that $\Isom(c)$ itself is not light.
Actually the inequality
$|(T_n^*\varphi-\varphi)(x)|=2|y_n x_n| \leq 2|y_n|  \|x\|$, $x\in c$, $\varphi\in\ell_{1}$,
implies that $T_n^*$ tends SOT to $\Id$, so the dual action of $G$ on $\ell_{1}$ is not SOT-discrete.
\end{proof}

\begin{remark}
 Note that the non-light subgroup $G$ of ${\rm Isom}(c)$ constructed in the proof above has the property that all its elements belong to the group ${\rm Isom}_f(c)$  of isometries which are finite rank perturbations of the identity.
\end{remark}

We observe the following relation between groups acting on a space and on a complemented subspace.

\begin{lemma}
\label{lemma43}
Assume $Y$  embeds complementably in $X$. If every bounded group of isomorphisms on $X$ is light, then every bounded group of isomorphisms on $Y$ is light.
\end{lemma}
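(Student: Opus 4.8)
The plan is to reduce lightness of a group acting on $Y$ to lightness of a naturally associated group acting on $X$, thereby invoking the hypothesis. Fix a bounded projection $P$ on $X$ whose range we identify with $Y$, and write $X = Y \oplus W$ with $W = \ker P$, so that each $x \in X$ decomposes uniquely as $x = Px + (\Id - P)x =: y + w$. Given a bounded group $G \leqslant \mathrm{GL}(Y)$, I would \emph{inflate} it to a group $\tilde G \leqslant \mathrm{GL}(X)$ by letting each $g \in G$ act as the identity on the complement, i.e. $\tilde g(y + w) = gy + w$. One checks immediately that $g \mapsto \tilde g$ is a group homomorphism with $\widetilde{g^{-1}} = (\tilde g)^{-1}$, and that $\norm{\tilde g x} = \norm{gPx + (\Id - P)x} \leq (\norm{g}\,\norm{P} + \norm{\Id - P})\,\norm{x}$, so $\tilde G$ is a bounded subgroup of $\mathrm{GL}(X)$. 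By hypothesis $\tilde G$ is therefore light.

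Next I would compare the two operator topologies across the decomposition. Since the SOT is always finer than the WOT, lightness of $G$ amounts to showing that every net $(g_\alpha)$ in $G$ with $g_\alpha \to g$ in WOT on $Y$ also satisfies $g_\alpha \to g$ in SOT on $Y$. The key observation is that WOT-convergence transfers upward: for any $x \in X$ and $x^* \in X^*$, writing $y = Px$ and $y^* = x^*|_Y \in Y^*$, we have $x^*(\tilde g_\alpha x - \tilde g x) = x^*\big((g_\alpha - g)y\big) = y^*(g_\alpha y) - y^*(g y) \to 0$; hence $\tilde g_\alpha \to \tilde g$ in WOT on $X$. Lightness of $\tilde G$ then yields $\tilde g_\alpha \to \tilde g$ in SOT on $X$. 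Finally, SOT-convergence transfers downward by testing on vectors of $Y$: for $y \in Y$ we have $\tilde g_\alpha y = g_\alpha y$ and $\tilde g y = g y$, so $\norm{g_\alpha y - g y} \to 0$, which is exactly $g_\alpha \to g$ in SOT on $Y$. This closes the argument.

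The computations involved are routine, so I do not anticipate a serious analytic obstacle; the only points demanding care are bookkeeping ones. First, ``complementably embeds'' provides an isomorphic, not isometric, copy of $Y$ inside $X$, so the norm it inherits from $X$ is merely equivalent to the original norm of $Y$. This is harmless because, as already observed in the excerpt, lightness is independent of the choice of equivalent norm, so one may freely regard $Y$ as a closed complemented subspace of $X$ carrying the inherited norm. Second, one must verify that the inflated group is genuinely bounded, which is precisely where boundedness of $G$ and of the projection $P$ enter. I expect the main conceptual step, rather than an obstacle, to be recognizing that the identity-on-the-complement extension is exactly the device making WOT lift and SOT restrict compatibly between $Y$ and $X$.
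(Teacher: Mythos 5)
Your proof is correct and follows essentially the same route as the paper: extend each $g \in G$ to $\tilde g = g \oplus \Id$ on $X = Y \oplus W$, note $\tilde G$ is bounded and hence light, and transfer WOT-convergence up and SOT-convergence down. The only differences are cosmetic — you check convergence to a general $g$ rather than just to $\Id$, and you explicitly record the (harmless) point that the embedding is only isomorphic, which the paper leaves implicit.
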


\begin{proof}
Let $Z$ be a closed subspace of $X$ such that $X \simeq Y \oplus Z$. Let $G \leqslant GL(Y)$ be a bounded subgroup and for each $T \in G$, consider the operator $\tilde{T}=T \oplus {\rm Id}_Y \in GL(X)$. These operators form  a bounded subgroup $\widetilde{G}$ of $GL(X)$ which is therefore light.

Let $(T_{\alpha})_{\alpha \in I}$ be a net in $G$ such that $T_{\alpha} \stackrel{\text{WOT}}{\longrightarrow} \Id_Y$. Then  $\widetilde{T}_{\alpha} \stackrel{\text{WOT}}{\longrightarrow} \Id_X$, and
since $\widetilde{G}$ is light,  $\widetilde{T}_{\alpha} \stackrel{\text{SOT}}{\longrightarrow} \Id_X$. Since for every $y \in Y$,
$$ \|T_{\alpha}(y)-y\|_Y = \|\widetilde{T}_{\alpha}(y,0) - (y,0)\|_X \to 0,$$
it follows that $T_{\alpha} \stackrel{\text{SOT}}{\longrightarrow} \Id_Y$.
\end{proof}

Assume that $X$ is separable and that $G\leqslant \textrm{GL}(X)$ is a bounded group of isomorphisms on $X$. As we have seen, if $X$ either has the RNP or $G$ is SOT-compact, then $X$ admits a $G$-invariant LUR-renorming. It is natural to wonder whether the assumption on $G$ may be weakened somewhat and, in particular, whether a similar result holds true for cyclic groups $G$. We show that it is not the case.

\begin{teor}\label{th45}
Let $X$ be a separable Banach space containing an isomorphic copy of $c_0$. Then ${ \emph{GL}}(X)$ contains a WOT-indiscrete bounded cyclic subgroup $G$ with a distinguished point in $X$. In particular, $G$ is not light.
\end{teor}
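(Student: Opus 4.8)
The plan is to reduce the statement to a construction inside $c_0$ and then to build there a single power-bounded isomorphism whose cyclic group transports a fixed vector off to infinity.

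First I would reduce to $X=c_0$. Since $X$ is separable and contains an isomorphic copy of $c_0$, that copy is complemented by Sobczyk's theorem, so $X\simeq c_0\oplus Z$ with $Z$ closed. It then suffices to produce a power-bounded isomorphism $T_0\in \mathrm{GL}(c_0)$ such that $G_0=\langle T_0\rangle$ is bounded, WOT-indiscrete, and admits a distinguished point. Indeed, setting $T=T_0\oplus \Id_Z$ and $G=\langle T\rangle$, the direct-sum-with-identity technique used in the proof of Lemma \ref{lemma43} transfers all three properties: boundedness is clear; if $u\in c_0$ is distinguished for $G_0$ then $(u,0)$ is distinguished for $G$; and, writing $X^*=\ell_1\oplus Z^*$, a net $T_0^{n_\gamma}\to \Id$ WOT on $c_0$ yields $T^{n_\gamma}\to \Id_X$ WOT with $T^{n_\gamma}\neq \Id_X$, so $\Id_X$ is not WOT-isolated in $G$. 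Finally, once $G$ is WOT-indiscrete ($\Id$ not WOT-isolated) and SOT-discrete (a distinguished point makes $\Id$ SOT-isolated, since $\{S:\norm{Su-u\,}<c\}$ meets $G$ only in $\Id$), the WOT and SOT disagree at $\Id$, so $G$ is not light, which is the final assertion.

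The heart of the proof is the construction of $T_0$ on $c_0$. Replacing $\norm\cdot$ by the equivalent candidate norm $\triple{x}=\sup_{n\in\Z}\norm{T_0^n x}$ one may regard $T_0$ as an isometry, so boundedness amounts to checking that this renorming is equivalent to the sup norm. For WOT-indiscreteness I would fix integers $2\le N_1<N_2<\cdots$ and arrange that $T_0$ acts ``with period $N_m$'' on the $m$-th block of coordinates, so that for $n_K=\mathrm{lcm}(N_1,\dots,N_K)$ the operator $T_0^{n_K}$ fixes the first $K$ blocks pointwise; approximating each functional in $\ell_1$ by a finitely supported one then gives $T_0^{n_K}\to \Id$ WOT as $K\to\infty$, with $n_K\neq 0$ and $T_0^{n_K}\neq \Id$. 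The distinguished point would be a vector $u$ whose $T_0$-orbit is steadily carried toward higher and higher coordinates, so that $\norm{T_0^n u-u}\ge c$ for every $n\neq 0$.

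The main obstacle is to reconcile these two demands in a single power-bounded operator. WOT-convergence $T_0^{n_K}\to \Id$ forces $T_0^{n_K}u-u\to 0$ weakly, hence --- since we work in $c_0$ --- forces the mass of $T_0^{n_K}u-u$ to escape to infinity while its norm stays $\ge c$. This excludes every ``rigid'' construction: no sign-permutation isometry of the sup norm can work (consistent with $c_0$ being light, Example \ref{faitc_0}, so $T_0$ must fail to be a sup-norm isometry), and neither can any sum of two-dimensional rotations nor, more generally, any block-diagonal $T_0$, because for such operators $T_0^n u-u$ remains supported in the blocks of $u$: once the finitely many blocks on which $u$ has norm $\ge c/2$ are fixed by $T_0^{n_K}$, the vector $u\in c_0$ is displaced by less than $c$, contradicting the distinguished-point bound. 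Thus $T_0$ must genuinely carry the heavy mass of $u$ from low to high coordinates, recurrently, while remaining power-bounded and invertible; realizing this transport --- for instance through a staircase of coupled rotations, or through cyclic block-shifts linked by a small summable leak into the next block --- and verifying power-boundedness in spite of the transport is the delicate point. Once such a $T_0$ is produced, the WOT-indiscreteness and the distinguished-point property are confirmed by the two computations indicated above.
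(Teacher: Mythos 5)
There is a genuine gap: your proof never produces the operator it needs. The reduction to $c_0$ via Sobczyk's theorem and $T=T_0\oplus \Id_Z$, the transfer of boundedness, WOT-indiscreteness and the distinguished point, and the final observation that a distinguished point makes $\Id$ SOT-isolated while WOT-indiscreteness prevents lightness --- all of this is correct and coincides with the paper. But the entire content of the theorem lies in constructing $T_0$, and your text ends with a description of what $T_0$ should do (``realizing this transport \ldots is the delicate point. Once such a $T_0$ is produced\ldots''), which is an acknowledged placeholder, not a proof.

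Worse, your obstruction analysis steers away from the actual solution. The paper's operator \emph{is} a block-diagonal sum of two-dimensional rotations: on $c(\R^2)$, the space of \emph{convergent} $\R^2$-valued sequences with the sup norm, set $T((x_n)_n)=(R_n x_n)_n$ with $R_n$ the rotation by angle $\frac{2\pi}{n}$. Then $T$ is an isometry of $c(\R^2)$; since $R_n^{k!}=\Id_{\R^2}$ for $n\le k$ and the rotation angles tend to $0$ (so limits of sequences are preserved), one gets $T^{k!}\to\Id$ WOT; and the \emph{constant} sequence $u=((1,0),(1,0),\ldots)$ satisfies $\norm{T^k u-u}\ge \norm{R_{2k}^k(1,0)-(1,0)}_2=2$, because $R_{2k}^k$ is the rotation by $\pi$. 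Your argument that block-diagonal operators cannot work --- ``once the finitely many blocks on which $u$ has norm $\ge c/2$ are fixed, $u$ is displaced by less than $c$'' --- tacitly assumes the block norms of $u$ vanish at infinity, i.e.\ that $u$ lives in a $c_0$-sum of the blocks. The paper evades exactly this by working in $c(\R^2)\simeq c\simeq c_0$ and conjugating by the isomorphism: the resulting $T_0\in\textrm{GL}(c_0)$ is block-diagonal only with respect to a transported, non-canonical structure, and the image of $u$ is a legitimate distinguished point in $c_0$. (This is also consistent with your correct remark that $T_0$ cannot be a sup-norm isometry of $c_0$, by Example \ref{faitc_0}.) So no ``leak into the next block'' or mass transport is needed; what is needed, and what your proposal is missing, is the change of ambient norm via $c\simeq c_0$ that makes the naive rotation construction legal.
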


\begin{proof}
Consider the space $c(\R^2)$ of convergent sequences in the euclidean space $\R^2$ with the supremum norm.  We define an isometry $T$ of $c(\R^2)$ by setting
$$
T\big(({ x}_n)_{n\in\N}\big)=(R_{n}{x}_n)_{n\in\N}\quad \textrm{ for every } x=(x_n)_{n\in\N}\in c(\R^2),
$$
where, for every $n\in\N$, 
\[
R_{n}=\begin{pmatrix} \cos(\frac{2\pi}{n}) & -\sin(\frac{2\pi}{n}) \\ \cos(\frac{2\pi}{n}) & \sin(\frac{2\pi}{n}) \end{pmatrix} 
\]
 is the rotation of $\R^2$ of angle $\frac{2\pi} n$. Observe that, since $\lim_n\frac{2\pi}n=0$, we have  
$$
\lim_nT\big(({ x}_n)_{n\in\N}\big)=\lim_n(R_{n}{ x}_n)_{n\in\N}=\lim_n({ x}_n)_{n\in\N}\quad \textrm{ for every } x=(x_n)_{n\in\N}\in c(\R^2).
$$ 
 As also $R_n^{k!}={\rm Id}_{\R^2}$ whenever $k\ge n$, we deduce that $T^{k!}\stackrel{\text{WOT}}{ \longrightarrow} \Id$. So the cyclic subgroup $\langle T\rangle$ of ${\textrm{GL}}(c(\R^2))$ generated by $T$ is indiscrete in the WOT.
\par\smallskip
On the other hand, if we define $x=(x_n)_{n\in\N}\in c(\R^2)$ by setting ${x}_n=(1,0)$ for every $n\in\N$, we find that, for every $k\in\N$,
$$
\Norm{T^k x- x}_{c(\R^2)}\geq \norm{R_{2k}^k{x}_{2k}-{x}_{2k}}_2=\norm{(-1,0)-(1,0)}_2=2.
$$
So $x$ is a distinguished point for $\langle T\rangle$. 
\par\smallskip
Observe now that $c(\R^2)\simeq c\oplus c\simeq c_0\oplus c_0\simeq c_0$, so $T$ can be seen as an automorphism of $c_0$. Also, if $X$ is a separable Banach space containing $c_0$, $c_0$ is complemented in $X$ by Sobczyk's Theorem, i.e. $X$ can be written as $X=c_0\oplus Y$ for some subspace $Y$ of $X$. Then Lemma \ref{lemma43} applies. Actually  the group $G$ generated by $S=T\oplus {\rm I}$ on $X$ is not light, since
$S^{k!}\stackrel{\text{WOT}}{ \longrightarrow} \Id$, while  $G$ has a distinguished point in $X$.  
\end{proof}

\par\smallskip
\begin{remark}
It follows from Theorem \ref{th45} that any separable Banach space $X$ containing an isomorphic copy of $c_0$ admits a renorming $\triple{\,.\,}$ such that $(X,
\triple{\,.\,})$ is not light. 
\end{remark}

\par\smallskip
We finish this section with the following observation:

\begin{lemma}
Suppose $G$ is an abelian group acting by isometries on a metric space $(X,d)$ without isolated points, and inducing a dense orbit $G\cdot x$ for some element $x\in X$. Then, for every  $\epsilon>0$, there exists $g\in G\setminus\{1\}$ such that $sup_{z \in X} d(gz,z)<\epsilon$. 
\end{lemma}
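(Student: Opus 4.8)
The plan is to reduce the uniform estimate $\sup_{z\in X} d(gz,z)<\epsilon$ to a statement about the single base point $x$ of the dense orbit, and then to produce a small nontrivial group element by combining the density of $G\cdot x$ with the absence of isolated points.

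First I would record the key consequence of commutativity: for any $g,h\in G$ one has, using that the action is by isometries and that $G$ is abelian,
$$d\bigl(g\cdot(hx),\,hx\bigr)=d\bigl((gh)x,\,hx\bigr)=d\bigl(h(gx),\,hx\bigr)=d(gx,x).$$
Thus the displacement function $z\mapsto d(gz,z)$ is constant, equal to $d(gx,x)$, on the whole orbit $G\cdot x$. Next I would observe that this displacement function is continuous --- indeed $2$-Lipschitz, since by the triangle inequality $|d(gz,z)-d(gw,w)|\le d(gz,gw)+d(z,w)=2\,d(z,w)$ --- so being constant on the dense set $G\cdot x$ it is constant on all of $X$. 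This yields the identity $\sup_{z\in X}d(gz,z)=d(gx,x)$, which is the conceptual heart of the argument.

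It then remains to find, for each $\epsilon>0$, an element $h\in G\setminus\{1\}$ with $d(hx,x)<\epsilon$. Here I would use that $x$ is not isolated: there is a point $y\in X$ with $0<d(y,x)<\epsilon/2$, and by density of the orbit there is $h\in G$ with $d(hx,y)<\tfrac12\,d(y,x)$. The triangle inequality then gives $d(hx,x)\ge d(y,x)-d(hx,y)>0$, so $hx\neq x$ and hence $h\neq 1$, while at the same time $d(hx,x)\le d(hx,y)+d(y,x)<\epsilon$. Taking $g=h$ and invoking the identity of the previous step gives $\sup_{z\in X}d(gz,z)=d(hx,x)<\epsilon$, as required.

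The computation and the point-finding are both routine; the only real idea, and the step I would single out as the crux, is the first one. It is precisely commutativity that makes the displacement $d(gz,z)$ invariant along the orbit and thus constant on $X$, reducing a uniform bound (a supremum over all $z\in X$) to controlling the single scalar $d(gx,x)$; once this is in hand, the hypotheses ``dense orbit'' and ``no isolated points'' combine to force the existence of arbitrarily small nonzero displacements of $x$.
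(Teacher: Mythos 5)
Your proof is correct and is essentially the paper's own argument: commutativity makes the displacement function $z\mapsto d(gz,z)$ constant on the dense orbit $G\cdot x$, and the absence of isolated points combined with density yields a nontrivial element with $0<d(gx,x)<\epsilon$. You merely spell out two steps the paper leaves implicit, namely the $2$-Lipschitz continuity of the displacement function (behind the paper's ``the result follows by density'') and the explicit choice of $y$ and $h$ guaranteeing $h\neq 1$.
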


\begin{proof}
Indeed, since $X$ has no isolated points and the orbit $G\cdot x$ is dense, we may pick $g \in G$ so that $0<d(g x, x)<\epsilon $. For any $y$ in $G \cdot x$, written $y=hx$ for $h \in G$, we have
$$d(gy,y)=d(ghx,hx)=d(hgx,hx)=d(gx,x)<\epsilon.$$ The result follows by density. 
\end{proof}
As a particular instance, note that if $G$ is an SOT-discrete  group of isometries of a Banach space $X$ of dimension $>1$ with a dense orbit on $S_X$, then $G$ cannot be abelian.

\medskip
\section{LUR and Strictly Convex Isometry Invariant Renormings}

Theorem \ref{prop21} leads to the following question:

\begin{ques}
\label{ques31}
 Does there exist a light Banach space $X$ which admits no isometry invariant LUR renorming?
\end{ques}

It was observed in \cite{FRdisplay} that $X=L_1[0,1]$ does not admit any isometry invariant dense LUR renorming. In fact, since the norm of $L_1[0,1]$ is almost transitive and is not strictly convex, any equivalent renorming is just a multiple of the original norm, so it is not strictly convex either, and hence is not LUR. Thus $L_1[0,1]$ could be a natural example of a light Banach space which admits no isometry invariant LUR renorming. However,

\begin{prop} The space $L_1[0,1]$ is not light.
\end{prop}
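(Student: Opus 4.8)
The plan is to exhibit directly a sequence $(T_n)_n$ of surjective isometries of $L_1[0,1]$ that converges to $\Id$ in the Weak Operator Topology but not in the Strong Operator Topology; this immediately shows that $\Isom(L_1[0,1])$ is not light. Note that the machinery developed earlier does not help here: $L_1[0,1]$ contains no copy of $c_0$ (it is weakly sequentially complete), so Theorem \ref{th45} is unavailable, and Theorem \ref{prop21} runs in the wrong direction. A preliminary observation guides the search. The two most obvious candidates are ruled out: a pure multiplication isometry $f\mapsto\varepsilon f$ with $\varepsilon\in\{-1,1\}$ cannot converge WOT to $\Id$ nontrivially (this would force $\varepsilon\to 1$ in $\sigma(L_\infty,L_1)$, hence in measure), and a pure composition isometry $f\mapsto f\circ\sigma$ with $\sigma$ measure preserving cannot separate the topologies either, since $T\mathbf{1}_E=\mathbf{1}_{\sigma^{-1}E}$ is again an indicator and weak convergence of indicators to an indicator already forces norm convergence. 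Thus the weight and the change of variables must be used together.

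For the construction I would, for each $n$, partition $[0,1]$ into the $2^n$ dyadic intervals $I_k^{(n)}=[k2^{-n},(k+1)2^{-n})$ and let $\sigma_n\colon[0,1]\to[0,1]$ be the increasing piecewise-linear bijection mapping each $I_k^{(n)}$ onto itself, sending its left half onto the first quarter of $I_k^{(n)}$ and its right half onto the remaining three quarters. Its derivative $w_n:=\sigma_n'$ then takes the value $\tfrac12$ on the left halves and $\tfrac32$ on the right halves of the $I_k^{(n)}$, so $\norm{w_n-1}_1=\tfrac12$ for every $n$. Setting $T_nf=w_n\cdot(f\circ\sigma_n)$, the change of variables $s=\sigma_n(t)$ gives $\int|T_nf|=\int w_n\,|f\circ\sigma_n|=\int|f|$, so each $T_n$ is an isometry, and it is surjective because $\sigma_n$ is a bijection (its inverse is a weighted composition of the same form).

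The verification then splits into two routine parts. Non-convergence in SOT is immediate from $f=\mathbf{1}$: since $\sigma_n$ fixes the constant function, $T_n\mathbf{1}=w_n$ and $\norm{T_n\mathbf{1}-\mathbf{1}}_1=\norm{w_n-1}_1=\tfrac12$ for all $n$. For WOT-convergence the key point is that $w_n\to\mathbf{1}$ in the weak* topology $\sigma(L_\infty,L_1)$: the functions $w_n$ are bounded by $\tfrac32$ and have local mean $1$ on every dyadic interval, so $\int w_n h\to\int h$ holds first for dyadic simple $h$ and then, by density together with the uniform bound, for all $h\in L_1$. Now if $f$ is constant on the dyadic intervals of some level $m$ and $n\ge m$, then $\sigma_n$ keeps each point inside its level-$m$ interval, whence $f\circ\sigma_n=f$ and $T_nf=w_nf$; therefore $\int(T_nf)g\,dt=\int w_n(fg)\,dt\to\int fg\,dt$ for every $g\in L_\infty$, since $fg\in L_1$. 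An $\varepsilon/3$ approximation of an arbitrary $f\in L_1$ by dyadic simple functions, using $\norm{T_n}=1$, extends this to all $f$, so $T_n\xrightarrow{\text{WOT}}\Id$.

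The hard part is not the estimates but the design principle behind the construction: recognizing that neither sign changes nor measure-preserving substitutions alone can work, so that the mechanism must be a genuine Lamperti weight tending to $1$ weakly but not in norm, accompanied by a substitution that simultaneously restores the isometry property and is invisible on coarse (dyadic) functions. The weight drops out of the WOT computation yet survives in the SOT one, and that asymmetry is exactly what produces the failure of lightness.
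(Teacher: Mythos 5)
Your proof is correct and follows essentially the same route as the paper: both construct Lamperti-type weighted composition isometries $f\mapsto \varphi_n'\cdot(f\circ\varphi_n)$ whose substitution maps each dyadic interval of level $\le n$ onto itself, witness the SOT-failure on $f\equiv 1$, and obtain WOT-convergence first on dyadic step functions (where the substitution acts trivially) and then by density using $\|T_n\|=1$. The only difference is cosmetic: the paper uses the smooth homeomorphism $\varphi_n(t)=t+\frac{1-\cos(2^n\pi t)}{2^n\pi}$ and invokes the Riemann--Lebesgue lemma for the weight $1+\sin(2^n\pi t)$, whereas you take a piecewise-linear $\sigma_n$ and check the weak\textsuperscript{*} convergence of the weight $w_n$ by an elementary local-mean computation.
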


\begin{proof}
 For every $n \in \N$, define $\varphi_n:[0,1] \to [0,1]$ by setting
 $$\varphi_n(t) = t + \dfrac{1 - \cos(2^n \pi t)}{2^n \pi},\quad t\in [0,1],$$
 and $T_n: L_1[0,1] \to L_1[0,1]$ by
  $$T_n(f)(t) = \varphi_n'(t)f(\varphi_n(t)), \quad f\in L_1[0,1],\; t\in[0,1].$$
  Note that $\varphi_n$ is a differentiable bijection from $[0,1]$ into itself. So
  $T_n$ is a surjective linear isometry of $L_1[0,1]$. Moreover, $T_n \stackrel{\text{SOT}}{\centernot \longrightarrow} \Id$, since for $f \equiv 1$ we have 
  \[
  \|T_n(1) - 1\|_1 = \| \sin(2^n \pi x)\|_1 = \dfrac{2}{\pi} \;\textrm{ for every } n\in\N.\]
  On the other hand, $T_n \stackrel{\text{WOT}}{\longrightarrow} \Id$. To prove this, we need to check that \[ \int_0^1 T_n(f)(t)g(t)dt \longrightarrow \int_0^1 f(t)g(t) dt \quad\textrm{for every } f \in L_1[0,1] \textrm{ and }g \in L_{\infty}[0,1].\] 
  By the linearity of $T_n$ and the density of step functions in $L_1[0,1]$, it is sufficient to consider the case where $f$ is the indicator function of a segment 
  $I_{m,k}=[\frac{2k}{2^m}, \frac{2(k+1)}{2^m}]$,
  where $m \geq 1$ and $ 0 \le k \le 2^{m-1} - 1$. In this case the function $\varphi_n$
 is a bijection from $I_{m,k}$ into itself for every $n\ge m$. Thus $f\circ\varphi_{n}=\varphi_{n}$, and
   \begin{eqnarray*}
    \int_0^1 T_n(f)(t) g(t) dt & = & \int_0^1 \varphi_n'(t)f(\varphi_n(t)) g(t) dt  
    =  \int_0^1 \varphi_n'(t)f(t) g(t) dt \\
    & = & \int_0^1 f(t) g(t) dt + \int_0^1 \sin(2^n \pi t)f(t) g(t) dt. \\    
   \end{eqnarray*}
The result then follows from the Riemann-Lebesgue Lemma.
\end{proof}
\par\smallskip

\begin{remark}
 Another space of which it is well known that it does not admits any LUR renorming is $\ell_{\infty}$.
Indeed $\ell_\infty$ does not admit any equivalent norm with the Kadec-Klee property  (\cite[Ch. 2, Th. 7.10]{DZ}), while every LUR norm satisfies the  Kadec-Klee property (\cite[Ch. 2, Prop. 1.4]{DZ}).
 However, $\ell_{\infty}$ does admit a strictly convex renorming (see \cite[p. 120]{Diestel}). We note here that it does not admit any isometry invariant strictly convex renorming. To see this, consider the points $x = (1,1,0,1,0,1,0,\dots)$ and $y = (-1,1,0,1,0,1,0,\dots)$. Setting $z = (x+y)/{2} = (0,1,0,1,0,1,0,\dots)$, it is readily seen that there exist two isometries $T$ and $S$ of $\ell_{\infty}$ such that $Tx = y$ and $Sx = z$. So, for any isometry invariant renorming $\triple \cdot$ of $\ell_{\infty}$ we have $\triple x=\triple y=\triple z$, and therefore $\triple\cdot$ cannot be strictly convex.
\end{remark}

\par\smallskip
We now prove

\begin{prop}
\label{prop54}
 The space $\ell_\infty$ is not light.
\end{prop}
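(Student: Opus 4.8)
The plan is to mimic, on $\ell_\infty$, the sign-flip construction already used for $c$ in Proposition~\ref{sophie}. For each $n\in\N$ define the surjective linear isometry $T_n$ of $\ell_\infty$ by
$$
T_n\big((x_k)_{k\in\N}\big)=(x_1,\dots,x_{n-1},-x_n,x_{n+1},\dots),\quad (x_k)_{k\in\N}\in\ell_\infty.
$$
Each $T_n$ preserves the supremum norm, is linear, and is its own inverse, so $T_n\in\Isom(\ell_\infty)$. The goal is to show that the sequence $(T_n)$ converges WOT to $\Id$ while failing to converge SOT to $\Id$; since lightness would force WOT convergence to imply SOT convergence, this immediately yields that $\Isom(\ell_\infty)$ is not light, i.e.\ that $\ell_\infty$ is not light.

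The SOT part is immediate: taking $\mathbf 1=(1,1,\dots)\in\ell_\infty$ we have $T_n\mathbf 1-\mathbf 1=-2e_n$, so $\norm{T_n\mathbf 1-\mathbf 1}_\infty=2$ for every $n$, and hence $T_n\stackrel{\text{SOT}}{\centernot\longrightarrow}\Id$. For the WOT part, I would first record the elementary identity
$$
T_nx-x=-2x_n\,e_n,\quad x=(x_k)_{k\in\N}\in\ell_\infty,
$$
so that for any functional $\phi\in(\ell_\infty)^*$ one has $\phi(T_nx-x)=-2x_n\,\phi(e_n)$, whence $|\phi(T_nx-x)|\le 2\norm{x}_\infty\,|\phi(e_n)|$. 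Thus everything reduces to proving that $\phi(e_n)\to 0$ as $n\to\infty$, for every $\phi\in(\ell_\infty)^*$.

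The only genuine subtlety, and what I expect to be the main obstacle, is that $(\ell_\infty)^*$ is enormous (it contains not just $\ell_1$ but also purely finitely additive measures, Banach limits, etc.), so one cannot argue coordinatewise as in the $c$ case where the dual is simply $\ell_1$. The key observation that dissolves this difficulty is that the vectors $e_n$ all lie in the subspace $c_0\subset\ell_\infty$. Hence for any $\phi\in(\ell_\infty)^*$, its restriction $\phi|_{c_0}$ belongs to $(c_0)^*\cong\ell_1$, say $\phi|_{c_0}=(a_k)_{k\in\N}\in\ell_1$, and therefore $\phi(e_n)=a_n\to 0$ because $\sum_k|a_k|<\infty$. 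Combined with the bound above, this gives $\phi(T_nx-x)\to 0$ for every $x\in\ell_\infty$ and every $\phi\in(\ell_\infty)^*$, i.e.\ $T_n\stackrel{\text{WOT}}{\longrightarrow}\Id$, completing the argument.
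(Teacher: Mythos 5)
Your proof is correct and takes essentially the same route as the paper: the same sign-flip isometries $T_n$, the same witness $(1,1,\dots)$ for the failure of SOT convergence, and the same key fact that $(\phi(e_n))_{n\in\N}\in\ell_1$ for every $\phi\in(\ell_\infty)^*$. The only (harmless) difference is that the paper asserts this fact directly, whereas you justify it by restricting $\phi$ to $c_0$ and invoking $(c_0)^*\cong\ell_1$.
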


\begin{proof}
 Consider the sequence of isometries $T_n:\ell_{\infty} \to \ell_{\infty}$, $n\in\N$, defined by
 $$T_n(x_1,\dots, x_{n-1},x_n,x_{n+1},\dots) = (x_1,\dots, x_{n-1},-x_n,x_{n+1},\dots),\quad x=(x_k)_{k\in\N}\in\ell_{\infty}.$$
 Notice that $T_n \stackrel{\text{SOT}}{\centernot \longrightarrow} \Id$, since $\|T_n(1,1,\dots) - (1,1,\dots)\|_{\infty} = 2$ for every  $n\in\N$. On the other hand, observe that $T_n \stackrel{\text{WOT}}{\longrightarrow} \Id$. Indeed, if $(e_j)_{j\in\N}$ denotes the canonical basis of $\ell_{\infty}$, the sequence $(\beta(e_j))_{j\in\N}$ belongs to $\ell_1$ for every $\beta\in\ell_{\infty}^{*}$. In particular, $\beta(e_j)\to 0$. Thus $\beta(T_nx-x)=-2x_n\beta(e_n)\to 0$ for every 
 $x\in\ell_{\infty}$ and $\beta\in\ell_{\infty}^{*}$, showing that $T_n \stackrel{\text{WOT}}{\longrightarrow} \Id$.
\end{proof}

A similar proof allows us to construct many examples of 
$C(K)$-spaces which are not light.

\begin{teor}\label{Propo}
 Let $K$ be a compact space with infinitely many connected components. Then $C(K)$ is not light.
\end{teor}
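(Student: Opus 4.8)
The plan is to transplant the proof of Proposition \ref{prop54} from $\ell_\infty$ to $C(K)$. There the crucial isometries were the coordinate sign-flips $T_n$, which are precisely the operators ``multiply by a $\{-1,1\}$-valued function supported on a single coordinate''. In $C(K)$ the analogue of such a function is $\epsilon_V := 1 - 2\chi_V$, where $V \subseteq K$ is clopen: clopenness of $V$ is exactly what guarantees $\epsilon_V \in C(K)$, and then multiplication $f \mapsto \epsilon_V f$ is a surjective linear isometry of $C(K)$ (an involution). To reproduce the WOT-to-$\Id$ behaviour while keeping the SOT-failure, I want a sequence $(V_n)_{n\in\N}$ of pairwise disjoint nonempty clopen subsets of $K$, and I set $T_n f = \epsilon_{V_n} f$.

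The main point, and the only genuinely topological step, is to extract such a sequence from the hypothesis that $K$ has infinitely many connected components. The key observation is that every connected component of $K$ lies entirely within a single piece of any partition of $K$ into two clopen sets, since a connected set cannot meet both pieces of a clopen partition. Now a space with at least two components is disconnected, hence admits a nontrivial clopen partition $K = A \sqcup B$; as the infinitely many components distribute among $A$ and $B$, at least one piece, say $B$, still contains infinitely many components, while $A$ is a nonempty clopen set containing at least one component. Iterating the same splitting inside $B$ (clopen subsets of a clopen set are clopen in $K$) peels off at each stage a nonempty clopen set $V_n$ disjoint from all the previous ones. I expect this extraction to be the main obstacle; everything afterwards is routine.

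Once $(V_n)$ is fixed, the verification mirrors Proposition \ref{prop54}. Testing on the constant function $\mathbf 1 \equiv 1$ gives $\|T_n \mathbf 1 - \mathbf 1\|_\infty = \|{-2\chi_{V_n}}\|_\infty = 2$, since each $V_n$ is nonempty, so $T_n \stackrel{\text{SOT}}{\centernot\longrightarrow}\Id$. For the WOT convergence I identify $C(K)^*$ with the space $M(K)$ of finite signed regular Borel measures via the Riesz representation theorem. For $f \in C(K)$ and $\mu \in M(K)$,
$$\int_K (T_n f - f)\, d\mu = -2\int_{V_n} f\, d\mu, \qquad \Big|\int_{V_n} f\, d\mu\Big| \le \|f\|_\infty\, |\mu|(V_n).$$
Because the $V_n$ are pairwise disjoint and $|\mu|$ is finite, $\sum_n |\mu|(V_n) \le |\mu|(K) < \infty$, whence $|\mu|(V_n) \to 0$ and the left-hand side tends to $0$. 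Thus $T_n \stackrel{\text{WOT}}{\longrightarrow}\Id$ in $\Isom(C(K))$ while $T_n \not\to\Id$ in SOT, so the two operator topologies differ on $\Isom(C(K))$ and $C(K)$ is not light.
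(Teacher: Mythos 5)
Your proof is correct and follows the paper's own construction in all essentials: a sequence of pairwise disjoint nonempty clopen sets, the sign-flip multiplication isometries $T_nf=(1-2\chi_{V_n})f$, failure of SOT convergence tested on the constant function $\mathbf{1}$, and WOT convergence because every functional must asymptotically lose mass along the disjoint sets. You differ only in how two sub-steps are justified. For the extraction of the clopen sequence, the paper separates two points lying in distinct components by a clopen set, which implicitly invokes the fact that in a compact Hausdorff space the component of a point is the intersection of the clopen sets containing it (components coincide with quasi-components); your iterated splitting uses only that a nonempty space with at least two components is disconnected and that a connected set cannot meet both pieces of a clopen partition, which is slightly more elementary and does not need the compact Hausdorff coincidence (your checks that components of $K$ inside the clopen piece $B$ are exactly the components of $B$, and that clopen-in-clopen is clopen, are the right ones). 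For WOT convergence, you use the Riesz representation $C(K)^*\cong M(K)$ together with $\sum_n|\mu|(V_n)\le|\mu|(K)<\infty$, whereas the paper argues softly: for every $(\alpha_n)\in c_0$ the series $\sum_n\alpha_n f\chi_n$ converges in $C(K)$, hence $\Phi(f\chi_n)\to 0$ for every $\Phi\in C(K)^*$ --- an argument that avoids measure theory (and any appeal to Riesz representation) altogether. Both routes are valid; yours is the more concrete, the paper's the more self-contained functional-analytic one.
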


\begin{proof}
 We claim that there exists under the assumption of Theorem \ref{Propo} a sequence $(N_{n})_{n\in\N}$ of disjoint clopen subsets of $K$. Indeed, choose two points $x_{1}$ and $y_{1}$ of $K$ which belong to two different connected components of $K$. Since the connected component of a point $x$ of $K$ is the intersection of all the clopen subsets of $K$ containing $x$, there exists a clopen subset $K_{1}$ of $K$ such that $x_{1}\in K_{1}$ and $y_{1}\in L_{1}:=K\setminus K_{1}$. The two sets $K_{1}$ and $L_{1}$ are compact, and one of them, say $K_{1}$, has infinitely many connected components. We set then $N_{1}=L_{1}$, and repeat the argument starting from the compact set $K_{1}$. This yields a sequence $(N_{n})_{n\in\N}$ of disjoint clopen subsets of $K$.
 \par\medskip
 For each integer $n\in\N$, define $T_{n}\in\textrm{Isom}(C(K))$ by setting, for every $f\in C(K)$ and every $x\in K$,
 \[
T_{n}(f)(x)=
\begin{cases}
-f(x)&\textrm{if}\ x\in N_{n}\\ \hphantom{-}f(x)&\textrm{otherwise}.
\end{cases}
\]
If $\chi _{n}$ denotes the indicator function of the set $N_{n}$, we have 
$T_{n}(f)=f(1-2\chi _{n})$ for every $f\in C(K)$. Applying this to the constant function $f\equiv 1$, we have $||T_{n}(f)-f||_{\infty}=2$ for every $n\in\N$, so that
$T_n \stackrel{\text{SOT}}{\centernot \longrightarrow} \Id$. On the other hand, the same kind of argument as in Proposition \ref{prop54} shows that $T_n\stackrel{\text{WOT}}{ \longrightarrow} \Id$. Indeed, we have $\Phi (T_{n}f-f)=-2\Phi (f\chi _{n})$ for every functional $\Phi \in C(K)^{*}$ and every $f\in C(K)$. For every sequence $(\alpha _{n})_{n\in\N}\in c_{0}$, the series $\sum_{n\in\N}\alpha _{n}f\chi _{n}$ converges in $C(K)$, so that the series $\sum_{n\in\N}\alpha _{n}\Phi (f\chi _{n})$ converges. It follows that $\Phi (f\chi _{n})\rightarrow 0$ as 
$n\rightarrow +\infty$, which proves our claim.
 \end{proof}
 
As a direct consequence of Theorem \ref{Propo}, we retrieve the result, proved in Proposition \ref{propc} above, that the space $c$ of convergent sequences is not light. Also, we immediately deduce that the space of continuous functions on the Cantor space is not light.

\begin{cor}\label{CorPropo}
 The space $C(\{0,1\}^{\N})$ is not light.
\end{cor}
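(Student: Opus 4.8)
The plan is to deduce this directly from Theorem \ref{Propo} by verifying that the Cantor space $K = \{0,1\}^{\N}$ satisfies its sole hypothesis, namely that it has infinitely many connected components. First I would recall that $\{0,1\}^{\N}$, equipped with the product topology, is a compact metrizable space (for instance via the metric $d(x,y) = \sum_{k} |x_k - y_k|/2^k$), so it indeed falls within the scope of Theorem \ref{Propo} once the condition on connected components is checked.

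The key point is that the Cantor space is totally disconnected. Given two distinct points $x \neq y$, there is a coordinate $k$ with $x_k \neq y_k$, and the cylinder $\{z : z_k = x_k\}$ is a clopen subset of $\{0,1\}^{\N}$ containing $x$ but not $y$. Since the connected component of a point is the intersection of all clopen sets containing it (exactly the fact already used in the proof of Theorem \ref{Propo}), no connected subset can contain both $x$ and $y$. Hence the connected component of each point $x$ is the singleton $\{x\}$. As $\{0,1\}^{\N}$ is infinite (in fact uncountable), it therefore has infinitely many connected components.

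Having verified the hypothesis, I would simply invoke Theorem \ref{Propo} to conclude that $C(\{0,1\}^{\N})$ is not light. There is essentially no obstacle here: the only step requiring a line of justification is the total disconnectedness of the Cantor space, which is immediate from the observation that coordinate cylinders separate distinct points by clopen sets. If one preferred a self-contained argument, one could even bypass Theorem \ref{Propo} entirely and exhibit an explicit sequence of isometries directly, using a partition of $\{0,1\}^{\N}$ into infinitely many clopen pieces and the sign-flip operators $T_n(f) = f(1 - 2\chi_{N_n})$ as in the proof of Theorem \ref{Propo}; but appealing to the theorem is the cleanest route.
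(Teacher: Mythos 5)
Your proposal is correct and matches the paper's argument exactly: the paper also deduces the corollary immediately from Theorem \ref{Propo}, since the Cantor space is an infinite totally disconnected compact space and hence has infinitely many connected components. Your verification via clopen coordinate cylinders is precisely the (implicit) justification the paper relies on.
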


In view of the results above, combined with the known fact that the space $C([0,1]^2)$ is not light, it may seem reasonable to conjecture that none of the spaces $C(K)$, where $K$ is any infinite compact space, is light. However, our next result shows that this is not the case.

\begin{teor}\label{prop56old}
 Let $K$ be an infinite compact connected space. Then $C(K)$ is light if and only if the topologies of pointwise and uniform convergence coincide on the group $\emph{Homeo}(K)$ of homeomorphisms of $K$.
\end{teor}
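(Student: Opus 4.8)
The plan is to transfer the problem from the isometry group of $C(K)$ to the homeomorphism group of $K$ via the Banach--Stone theorem. Since $K$ is connected, every surjective linear isometry $T$ of $C(K)$ has the form $Tf=\varepsilon\,(f\circ\phi)$ for a sign $\varepsilon\in\{-1,1\}$ and a $\phi\in\mathrm{Homeo}(K)$ (the weight function $K\to\{-1,1\}$ is constant by connectedness). Thus $\Isom(C(K))$ is, as a set, the product of the finite group $\{\pm\Id\}$ with the copy of $\mathrm{Homeo}(K)$ given by $\phi\mapsto T_\phi$, $T_\phi f=f\circ\phi$; as the $\{\pm\Id\}$ factor is finite and the operator topologies are translation invariant, $C(K)$ is light if and only if the WOT and SOT agree on $\{T_\phi:\phi\in\mathrm{Homeo}(K)\}$. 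First I would record the two operator topologies as topologies on $\mathrm{Homeo}(K)$. For the SOT, $T_{\phi_\alpha}\to\Id$ means $\|f\circ\phi_\alpha-f\|_\infty\to0$ for every $f\in C(K)$; since $K$ carries a unique uniformity, generated by $C(K)$, and every $f\in C(K)$ is uniformly continuous, this is exactly uniform convergence $\phi_\alpha\to\mathrm{id}$. Hence the SOT induces the topology $\tau_u$ of uniform convergence.

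For the WOT, using $C(K)^*=M(K)$, $T_{\phi_\alpha}\to\Id$ means $\int f\circ\phi_\alpha\,d\mu\to\int f\,d\mu$ for all $f\in C(K)$ and $\mu\in M(K)$. Testing against Dirac masses $\mu=\delta_x$ shows that WOT-convergence forces pointwise convergence $\phi_\alpha\to\mathrm{id}$, so the WOT-induced topology refines the topology $\tau_p$ of pointwise convergence, while being coarser than the SOT. This gives the chain $\tau_p\subseteq\mathrm{WOT}\subseteq\mathrm{SOT}=\tau_u$ of topologies on $\mathrm{Homeo}(K)$. The easy implication is then immediate: if $\tau_p=\tau_u$, the WOT is squeezed between two equal topologies, whence $\mathrm{WOT}=\mathrm{SOT}$ and $C(K)$ is light.

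It remains to prove the converse, that $\tau_p\neq\tau_u$ forces $C(K)$ to be non-light, and this is the heart of the matter. Here one must resist the temptation to identify the WOT with $\tau_p$: in general the WOT is \emph{strictly} finer. For example, on $\mathrm{Homeo}([0,1]^2)$ a net of homeomorphisms fixing larger and larger finite sets (directed by inclusion) can be chosen to converge pointwise to $\mathrm{id}$ while pushing the bulk of Lebesgue measure toward $\{x_1\approx1\}$, so that $\int \pi_1\circ\phi_\alpha\,d\lambda\not\to\int\pi_1\,d\lambda$ and $T_{\phi_\alpha}\not\to\Id$ in WOT. Consequently a net realising $\tau_p\neq\tau_u$ --- a net $\phi_\alpha\to\mathrm{id}$ pointwise with $\|f_0\circ\phi_\alpha-f_0\|_\infty\geq\varepsilon_0$ cofinally, for some $f_0\in C(K)$ and $\varepsilon_0>0$ --- need not by itself witness non-lightness. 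The main obstacle is therefore to manufacture, out of this failure of uniform convergence, a genuinely WOT-null but not SOT-null net of composition operators. The plan is to use the macroscopic displacement guaranteed by $\varepsilon_0$ to build homeomorphisms $\psi_\beta$ for which the functions $f\circ\psi_\beta$ \emph{oscillate}: arranged so that the oscillations average out against every fixed Radon measure, yielding $T_{\psi_\beta}\to\Id$ in WOT by a Riemann--Lebesgue type cancellation, while $\|f_0\circ\psi_\beta-f_0\|_\infty$ remains bounded below, preventing SOT-convergence. This is exactly the mechanism underlying Megrelishvili's original example for $C([-1,1]^2)$, extracted there from Helmer's separately-but-not-jointly continuous action; the delicate point, which I expect to be the genuinely hard step, is to carry out such a construction directly from the bare discrepancy between $\tau_p$ and $\tau_u$.
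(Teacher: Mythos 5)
Your first half is sound and is essentially the paper's own argument: the Banach--Stone reduction (with connectedness forcing a constant sign, and the sign disposed of by testing against $f\equiv 1$), the identification $\mathrm{SOT}=\tau_u$ via the unique uniformity on $K$, and the sandwich $\tau_p\subseteq \mathrm{WOT}\subseteq \mathrm{SOT}=\tau_u$ obtained by testing WOT convergence against Dirac masses. Granting $\tau_p=\tau_u$, lightness follows exactly as in the paper's proof of Theorem \ref{prop56old}.

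The genuine gap is that you never prove the converse: your final paragraph is an unexecuted programme (``manufacture oscillating homeomorphisms \dots\ Riemann--Lebesgue type cancellation''), and you yourself concede that carrying it out from the bare hypothesis $\tau_p\neq\tau_u$ is ``the genuinely hard step.'' So, as written, only one implication of the equivalence is established. By contrast, the paper's converse is a two-line direct argument: given a net $\varphi_\alpha\to\varphi$ pointwise in $\mathrm{Homeo}(K)$, it forms $T_\alpha f=f\circ\varphi_\alpha$, asserts $T_\alpha\stackrel{\text{WOT}}{\longrightarrow}\Id$ (implicitly after reducing to $\varphi=\mathrm{id}_K$ by composing with $\varphi^{-1}$, which preserves pointwise convergence), and then invokes lightness to get SOT, i.e.\ uniform, convergence. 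It is fair to say that your scepticism about that step has real content: your finite-set-fixing net on the square pushing Lebesgue mass to an edge is a correct witness that pointwise convergence of a \emph{net} of homeomorphisms does not in general imply WOT convergence of the composition operators, so the paper's assertion is only automatic for \emph{sequences}, where it follows from dominated convergence against each $\mu\in M(K)=C(K)^*$ (and the sequential form is all that Theorem \ref{prop56} actually uses, via Birkhoff's theorems). But having identified this soft point, the productive move would have been either to justify the net step or to prove and use the sequential implication, not to pivot to an oscillation construction --- which is the mechanism of the paper's $L_1[0,1]$ example, not of this theorem --- and leave it unbuilt. In its current state the proposal does not prove the stated ``if and only if.''
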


\begin{proof}
 Suppose first that the topologies of pointwise and uniform convergence coincide on $\textrm{Homeo}(K)$. Let $(T_{\alpha })_{\alpha \in I}$ be a net of isometries of $C(K)$ such that $T_{\alpha }\stackrel{\text{WOT}}{\longrightarrow} \Id$. By the Banach-Stone Theorem and the connectedness of $K$, each isometry $T_{\alpha }$ of $C(K)$ has the form
 \[T_{\alpha }(f)= \varepsilon _{\alpha }\,f\circ \varphi _{\alpha }\quad\textrm{ for every } f\in C(K),\] where $\varepsilon _{\alpha }\in\{-1,1\}$ and $\varphi _{\alpha }\in \textrm{Homeo}(K)$. Since $T_{\alpha }\stackrel{\text{WOT}}{\longrightarrow} \Id$,
 ${\varepsilon _{\alpha }}\to 1$, so we can suppose without loss of generality that $\varepsilon _{\alpha }=1$ for every $\alpha \in I$. Moreover, the fact that $T_{\alpha }\stackrel{\text{WOT}}{\longrightarrow} \Id$ also implies that the net $(\varphi _{\alpha })_{\alpha \in I}$ converges pointwise to the identity function $\textrm{id}_{K}$ on $K$. Our assumption then implies that $(\varphi _{\alpha })_{\alpha \in I}$ converges uniformly to $\textrm{id}_{K}$ on $K$, from which it easily follows that $T_{\alpha }\stackrel{\text{SOT}}{\longrightarrow} \Id$. Thus $C(K)$ is light.
 \par\smallskip
 Conversely, suppose that $C(K)$ is light. Let $(\varphi_\alpha)_{\alpha \in I}$ be a net of elements of $\textrm{Homeo}(K)$ which converges pointwise to $\varphi \in \textrm{Homeo}(K)$. Consider the isometries $T_{\alpha }$ and $T$ of $C(K)$ defined by 
\[T_{\alpha }(f)= f\circ \varphi _{\alpha }\quad\textrm{ and }\quad
T(f)= f\circ \varphi \quad
\textrm{ for every } f\in C(K).\]
Then
 $T_{\alpha }\stackrel{\text{WOT}}{\longrightarrow} \Id$. Since $C(K)$ is light, 
 $T_{\alpha }\stackrel{\text{SOT}}{\longrightarrow} \Id$ and thus $(\varphi _{\alpha })_{\alpha \in I}$ converges to $\varphi $ uniformly on $K$.
\end{proof}

\begin{remark}
 Theorem \ref{prop56old} characterizes the lightness of $C(K)$ for infinite connected compact spaces $K$. One may naturally wonder whether the connectedness assumption is really necessary. It is indeed the case: there exist compact spaces $K$ with infinitely many connected components which are \emph{rigid} in the sense that their homeomorphism group $\textrm{Homeo}(K)$ is trivial (it consists uniquely of the identity map on $K$). The existence of such compacta is proved in \cite{dGW} (see the remark on p. $443$ at the end of Section $2$ of \cite{dGW}). By Proposition \ref{Propo}, $C(K)$ is not light for such a compact $K$, but the topologies of pointwise and uniform convergence obviously coincide on $\textrm{Homeo}(K)$.
\end{remark}

Birkhoff studied in his paper \cite{Bir} various topologies on so-called ``transformation spaces'', in particular on the groups of homeomorphisms of topological spaces. The notions of $A$-, $B$- and $C$-convergence of sequences of homeomorphisms on a given space $X$ introduced there correspond respectively to pointwise convergence, continuous convergence, and continuous convergence in both directions. Since on compact spaces continuous convergence and uniform convergence coincide, Proposition \ref{prop56old} can be rephrased, using Birkhoff's language, as saying that for compact connected spaces $K$, $C(K)$ is light if and only if $A$- and $B$-convergence  coincide on $\textrm{Homeo}(K)$. Now, it is observed in \cite[Th.\ 18]{Bir} that $A$-convergence implies $B$- and $C$-convergence for homeomorphisms of (disjoint or not)  finite unions of segments of the real line (this is essentially the content of Dini's second convergence theorem), while if $K$ contains  an $n$-dimensional region with $n\ge 2$ (i.e.\ an open set homeomorphic to an open subset of $\R^{n}$), $A$-convergence implies neither $B$- nor $C$-convergence for homeomorphisms of $K$ (\cite[Th.\ 19]{Bir}). In a more modern language, there exists under this assumption a sequence $(\varphi _{n})_{n\in\N}$
of homeomorphisms of $K$ such that $\varphi _{n}$ converges pointwise  but not uniformly on $K$ to the identity function on $K$. Combined with Proposition \ref{prop56old} above, this yields:

\begin{teor}\label{prop56}
 Let $K$ be an infinite compact connected space.
 \begin{enumerate}
\item[\emph{(a)}] If $K$ is homeomorphic to a finite union of segments of $\R$, $C(K)$ is light.
\item[\emph{(b)}] If $K$ contains an $n$-dimensional region for some $n\ge 2$, $C(K)$ is not light.
\end{enumerate}
\end{teor}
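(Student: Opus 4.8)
The plan is to deduce both parts directly from the characterization established in Theorem \ref{prop56old}, which asserts that for an infinite connected compact $K$ the space $C(K)$ is light if and only if the topologies of pointwise and uniform convergence coincide on $\textrm{Homeo}(K)$. Thus in each case it suffices to decide whether pointwise convergence of homeomorphisms of $K$ forces uniform convergence, and this is exactly the content of the Birkhoff dichotomy recalled just above the statement (the coincidence, respectively non-coincidence, of $A$- and $B$-convergence), once one uses that on a compact space continuous ($B$-) convergence and uniform convergence are the same.

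For part (a), I would suppose $K$ homeomorphic to a finite union of segments of $\R$ and invoke \cite[Th.\ 18]{Bir} (essentially Dini's second convergence theorem): for homeomorphisms of such a $K$, $A$-convergence implies $B$-convergence. Since $K$ is compact, $B$-convergence coincides with uniform convergence, so pointwise convergence of a net in $\textrm{Homeo}(K)$ entails its uniform convergence; as the reverse implication is trivial, the two topologies coincide on $\textrm{Homeo}(K)$, and Theorem \ref{prop56old} then yields that $C(K)$ is light.

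For part (b), I would suppose $K$ contains an $n$-dimensional region with $n\ge 2$ and appeal to \cite[Th.\ 19]{Bir}, according to which $A$-convergence implies neither $B$- nor $C$-convergence in this situation; concretely, there is a sequence $(\varphi_{n})_{n\in\N}$ in $\textrm{Homeo}(K)$ converging pointwise but not uniformly to $\textrm{id}_{K}$. Since a sequence is in particular a net, this single example already witnesses that the topologies of pointwise and uniform convergence do not coincide on $\textrm{Homeo}(K)$, whence $C(K)$ is not light by Theorem \ref{prop56old}.

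The genuine mathematical content lives in the two Birkhoff results quoted from \cite{Bir}, so within the present argument there is no real obstacle beyond two points of care. The first is the identification of continuous convergence with uniform convergence, which is precisely where compactness of $K$ is used and which makes the $B$-convergence of \cite{Bir} usable inside Theorem \ref{prop56old}. The second, relevant only to part (a), is that the implication ``pointwise $\Rightarrow$ uniform'' must hold at the level of nets, so as to match the topological statement of Theorem \ref{prop56old}, and not merely for sequences; I expect this to cause no difficulty, since the underlying monotone-convergence (P\'olya--Dini) argument on each segment --- finitely many control points together with monotonicity of the homeomorphisms on each component --- goes through verbatim for nets.
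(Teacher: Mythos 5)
Your proposal is correct and follows essentially the same route as the paper, which likewise derives both parts by combining the characterization in Theorem \ref{prop56old} with Birkhoff's Theorems 18 and 19 from \cite{Bir} (identifying $B$-convergence with uniform convergence via compactness). Your extra remark that the Dini-type implication in part (a) must be checked for nets rather than sequences is a point of care the paper passes over silently, and your resolution of it is sound.
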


For instance, the space $C[0,1]$ is light, while spaces $C([0,1]^n)$, $n\ge 2$, are not light. We thus retrieve in a natural way the 
original example of Megrelishvili of a non-light space.
\par\medskip

Theorem \ref{prop56} allows us 
to answer Question \ref{ques31} in the negative. Although $C[0,1]$ is light, it does not admit any isometry invariant LUR renorming. In fact, $C[0,1]$ does not admit any isometry invariant strictly convex renorming. In order to prove this, we need the following lemma:

\begin{lemma}
\label{lemma35}
 Let $f \in C[0,1]$ be such that there exists an interval $[a,b] \subset [0,1]$, $a<b$, on which $f$ is strictly monotone. Then there exists $g \in C[0,1]$ with the following three properties:
 \begin{enumerate}[(a)]
  \item $\|f\|_{\infty} = \|g\|_{\infty} = \left\|\dfrac{f+g}{2}\right\|_{\infty}$;
  \item $\|f-g\|_{\infty} > 0$;
  \item there exist two homeomorphisms  $\varphi$ and $\psi$ of $[0,1]$ such that $g = f \circ \varphi$ and $\dfrac{f+g}{2} = f \circ \psi$.
 \end{enumerate}
\end{lemma}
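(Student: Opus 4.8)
The plan is to build $g$ as a precomposition $g=f\circ\varphi$ of $f$ with a homeomorphism $\varphi$ of $[0,1]$ that is the identity off $[a,b]$, and likewise to realize $\dfrac{f+g}{2}$ as $f\circ\psi$ for a homeomorphism $\psi$. The guiding observation is that if $\varphi$ is a homeomorphism of $[0,1]$, then $f\circ\varphi$ has exactly the same range as $f$, since $\varphi([0,1])=[0,1]$; consequently $\|f\circ\varphi\|_{\infty}=\|f\|_{\infty}$ automatically. In this way property (a) will come for free from property (c): once both $g$ and $\dfrac{f+g}{2}$ are exhibited as compositions of $f$ with homeomorphisms of $[0,1]$, all three functions share the same supremum norm $\|f\|_{\infty}$. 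Assume without loss of generality that $f$ is strictly increasing on $[a,b]$, so that $f$ restricts to a homeomorphism of $[a,b]$ onto $[f(a),f(b)]$; the strictly decreasing case is entirely analogous.

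First I would fix any increasing homeomorphism $\varphi$ of $[0,1]$ which equals the identity outside $[a,b]$, maps $[a,b]$ onto itself while fixing $a$ and $b$, and differs from the identity at some point $x_0\in(a,b)$; then I set $g=f\circ\varphi$. Property (b) is immediate: $f$ is injective on $[a,b]$, and since $\varphi(x_0)\neq x_0$ with both $x_0$ and $\varphi(x_0)$ lying in $[a,b]$, we get $g(x_0)=f(\varphi(x_0))\neq f(x_0)$, whence $\|f-g\|_{\infty}>0$.

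The crux is to produce $\psi$ with $\dfrac{f+g}{2}=f\circ\psi$. Writing $h=\dfrac{f+g}{2}=\dfrac{f+f\circ\varphi}{2}$, one has $h=f$ outside $[a,b]$, while on $[a,b]$ the function $h$ is the average of the two strictly increasing continuous functions $f$ and $f\circ\varphi$, hence is itself strictly increasing, with $h(a)=f(a)$ and $h(b)=f(b)$. Thus $h$ maps $[a,b]$ homeomorphically onto $[f(a),f(b)]$, the range of $f|_{[a,b]}$. I would then define $\psi$ to be the identity off $[a,b]$ and $\psi=\big(f|_{[a,b]}\big)^{-1}\circ h$ on $[a,b]$, and check that this glues to a homeomorphism of $[0,1]$ satisfying $f\circ\psi=h$ throughout; the range argument of the first paragraph then yields $\|g\|_{\infty}=\|h\|_{\infty}=\|f\|_{\infty}$, completing (a). I expect the only delicate point to be this last construction: realizing $\dfrac{f+g}{2}$ as $f\circ\psi$ for an \emph{honest} homeomorphism $\psi$, rather than a mere continuous surjection. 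This is precisely where the strict monotonicity hypothesis is indispensable, for it simultaneously makes $f|_{[a,b]}$ invertible (so that $\big(f|_{[a,b]}\big)^{-1}$ makes sense) and forces $h$ to remain strictly monotone with the correct endpoint values and range, guaranteeing that $\big(f|_{[a,b]}\big)^{-1}\circ h$ is a strictly increasing bijection of $[a,b]$ fixing the endpoints.
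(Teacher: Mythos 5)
Your proof is correct and takes essentially the same route as the paper's: the paper picks an increasing homeomorphism $\xi$ of $[a,b]$ onto $[f(a),f(b)]$ with $\xi \not\equiv f|_{[a,b]}$ and sets $g=\xi$ on $[a,b]$, $g=f$ elsewhere, which is exactly your $g=f\circ\varphi$ under the correspondence $\xi = f\circ\varphi|_{[a,b]}$, and its $\psi = f^{-1}\bigl(\tfrac{\xi+f}{2}\bigr)$ on $[a,b]$ extended by the identity is precisely your $\bigl(f|_{[a,b]}\bigr)^{-1}\circ h$. The only difference is cosmetic --- you parametrize the perturbation by the homeomorphism $\varphi$ where the paper parametrizes by the new graph $\xi$ --- and your explicit checks (endpoints fixed, $h$ strictly increasing as an average of strictly increasing functions, gluing to an honest homeomorphism, norm equality via range invariance) are exactly the details the paper leaves implicit.
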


\begin{proof}
 Let $0 \le a < b \le 1$ be such that $f$ is strictly monotone on $[a,b]$. Without loss of generality, suppose that $f$ is strictly increasing on $[a,b]$. Let $\xi :[a,b] \to [f(a),f(b)]$ be an increasing homeomorphism such that $\xi \not \equiv f|_{[a,b]}$. Define $g \in C[0,1]$ and an homeomorphism $\varphi: [0,1] \to [0,1]$ by
$$  g(x) = \begin{cases}
          \xi(x) & \text{ if } x \in [a,b];\\
          f(x) & \text{ otherwise}          
         \end{cases}
\hspace{2cm}
\varphi(x) = \begin{cases}
             f^{-1}(\xi(x)) & \text{ if } x \in [a,b];\\
             x & \text{ otherwise.}
             \end{cases} $$   
Then, $g = f \circ \varphi$, $\|g\|_{\infty} = \|f\|_{\infty} = \left\|\dfrac{f+g}{2}\right\|_{\infty}$ and $\|f-g\|_{\infty} > 0$. Moreover, $f \circ \psi = \dfrac{f+g}{2}$, where $\psi:[0,1] \to [0,1]$ is the homeomorphism defined by
$$ \psi(x) = \begin{cases}
             f^{-1}\left(\dfrac{\xi(x)+f(x)}{2}\right) & \text{ if } x \in [a,b];\\
             x & \text{ otherwise.}
             \end{cases}$$
\end{proof}

\begin{prop}
\label{prop58} Let $\triple\cdot$ be an isometry invariant renorming of $C[0,1]$. Then there exists a dense subset of $C[0,1]$ where $\triple\cdot$ is not strictly convex.
\end{prop}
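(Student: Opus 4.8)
The plan is to exhibit a dense set of points of $C[0,1]$ at each of which the sphere of $\triple\cdot$ contains a nontrivial segment through that point, i.e.\ where $\triple\cdot$ fails to be strictly convex. Two ingredients drive this. First, by the Banach--Stone Theorem every homeomorphism $\varphi$ of $[0,1]$ induces a surjective linear isometry $f\mapsto f\circ\varphi$ of $(C[0,1],\norm\cdot_\infty)$; since $\triple\cdot$ is isometry invariant, $\triple{f\circ\varphi}=\triple f$ for all such $\varphi$. Second, the set $D$ of functions that are strictly monotone on some nondegenerate subinterval of $[0,1]$ is dense in $C[0,1]$, and Lemma \ref{lemma35} applies to each member of $D$.

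To see that $D$ is dense, given $h\in C[0,1]$ and $\epsilon>0$ I would first approximate $h$ within $\epsilon/2$ by a piecewise linear function $p$. If some linear piece of $p$ has nonzero slope then $p\in D$; otherwise $p$ is constant and the strictly increasing function $t\mapsto p(t)+\frac{\epsilon}{2}\,t$ lies in $D$ and within $\epsilon$ of $h$. Either way we obtain an element of $D$ arbitrarily close to $h$.

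Now fix $f\in D$ and apply Lemma \ref{lemma35}, producing $g\in C[0,1]$ with $\norm{f-g}_\infty>0$, homeomorphisms $\varphi,\psi$ of $[0,1]$ with $g=f\circ\varphi$ and $(f+g)/2=f\circ\psi$, and $\norm f_\infty=\norm g_\infty=\norm{(f+g)/2}_\infty$. The role of isometry invariance is to transfer these sup-norm equalities to $\triple\cdot$: $\triple g=\triple{f\circ\varphi}=\triple f$ and $\triple{(f+g)/2}=\triple{f\circ\psi}=\triple f$. Writing $r=\triple f>0$, the three vectors $f,g,(f+g)/2$ then all have $\triple\cdot$-norm $r$ while $f\ne g$; hence $(f+g)/2$ is a nontrivial midpoint of two distinct norm-$r$ vectors on the sphere $\{\triple\cdot=r\}$, so $\triple\cdot$ is not strictly convex at $(f+g)/2$.

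It remains to make these witnesses accumulate everywhere, and this is the only delicate point: the statement of Lemma \ref{lemma35} guarantees $\norm{f-g}_\infty>0$ but does not bound it. Here I would use the freedom in its construction, where the increasing homeomorphism $\xi\colon[a,b]\to[f(a),f(b)]$ is arbitrary subject only to $\xi\not\equiv f|_{[a,b]}$: choosing $\xi$ uniformly within $\eta$ of $f|_{[a,b]}$ makes $g$ differ from $f$ only on $[a,b]$ and by at most $\eta$, so that $\norm{f-(f+g)/2}_\infty\le\eta/2$. Combining the steps, given $h$ and $\epsilon$ I pick $f\in D$ with $\norm{h-f}_\infty<\epsilon/2$ and then $g$ as above with $\eta<\epsilon$, whence the non-strict-convexity point $(f+g)/2$ satisfies $\norm{h-(f+g)/2}_\infty<\epsilon$. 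Thus the set of points at which $\triple\cdot$ fails to be strictly convex is dense, which is the assertion. The main obstacle, as noted, is precisely this density of witnesses, resolved by the small-perturbation choice of $\xi$ inside Lemma \ref{lemma35}.
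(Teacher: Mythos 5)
Your proof is correct and takes essentially the same route as the paper: Lemma \ref{lemma35} together with Banach--Stone composition isometries and the invariance of $\triple\cdot$, followed by density of functions strictly monotone on some subinterval. Your additional care in locating the failure point at $(f+g)/2$ and shrinking $\|f-g\|_\infty$ via the choice of $\xi$ tidies up a detail the paper leaves implicit (it simply declares $\triple\cdot$ not strictly convex at $f$ and invokes density of piecewise linear functions), but it does not constitute a different approach.
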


\begin{proof}
 Let $f \in C[0,1]$ be a non-constant and affine function, and take $g, \varphi$ and $\psi$ as in Lemma \ref{lemma35}. Since $f \mapsto f \circ \varphi$ and $f \mapsto f \circ \psi$ define surjective linear isometries of $C[0,1]$, 
 $$
 \triple g=\triple{f \circ \varphi} = \triple f= \triple{f \circ \psi} = \TRIPLE{\dfrac{f+g}{2}}.$$
 So $\triple \cdot$ is not strictly convex at the point $f$. The result then follows from the fact that the set of piecewise linear functions is dense in $C[0,1]$.
\end{proof}

Combining Theorem \ref{prop56} and Proposition \ref{prop58}, we obtain:

\begin{cor}\label{reciproque}
The space $C[0,1]$ is light, but does not admit any isometry invariant LUR renorming.
\end{cor}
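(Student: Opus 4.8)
The plan is to deduce Corollary~\ref{reciproque} directly from the two results that immediately precede it, so the work is essentially one of assembly rather than fresh argument. The statement has two halves: that $C[0,1]$ is light, and that it admits no isometry invariant LUR renorming. The first half is nothing more than part~(a) of Theorem~\ref{prop56}, since $[0,1]$ is itself a segment of $\R$ (a degenerate finite union of one segment), hence an infinite compact connected space of the required type; so $C[0,1]$ is light. This is immediate and requires no calculation.

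For the second half I would argue by contradiction. Suppose $\triple\cdot$ were an isometry invariant LUR renorming of $C[0,1]$. An LUR norm is in particular strictly convex (rotundity is the special case of the LUR condition obtained by taking the constant sequence, or directly: if $x,y\in S_X$ with $\triple{x+y}=2$, then applying the LUR property at $x$ to the constant sequence $x_n=y$ forces $y=x$). Thus $\triple\cdot$ would be an isometry invariant \emph{strictly convex} renorming of $C[0,1]$. But Proposition~\ref{prop58} asserts that any isometry invariant renorming of $C[0,1]$ fails strict convexity on a dense subset — in particular it fails strict convexity somewhere — which is the desired contradiction. Hence no isometry invariant LUR renorming exists.

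Putting these two halves together yields the corollary: $C[0,1]$ is light (Theorem~\ref{prop56}(a)) yet carries no isometry invariant LUR renorming (Proposition~\ref{prop58} plus LUR $\Rightarrow$ strictly convex). I do not anticipate any genuine obstacle here, since both ingredients are already established; the only point that deserves an explicit word is the implication that an LUR norm is strictly convex, which the paper has in effect built into its definitions but which is worth stating so the reduction to Proposition~\ref{prop58} is transparent. The proof is therefore short: invoke Theorem~\ref{prop56} for lightness, and combine Proposition~\ref{prop58} with the standard fact that LUR implies rotund to rule out an invariant LUR renorming.
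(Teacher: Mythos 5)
Your proposal is correct and follows exactly the paper's route: the paper derives the corollary by combining Theorem~\ref{prop56}(a) (lightness) with Proposition~\ref{prop58} (no isometry invariant strictly convex renorming), leaving the standard implication LUR $\Rightarrow$ strictly convex implicit, which you have merely spelled out. No gaps.
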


\begin{remark}
 Using the same arguments as in the proofs of Proposition \ref{prop56old}, Theorem \ref{prop56} and Proposition \ref{prop58}, one can prove that $C_0(\R)$ is light, but does not admit a strictly convex isometry invariant renorming either.
\end{remark}

\begin{remark}
The examples presented in this section show that there is no general relation between closed subspaces and their respective isometry groups, in terms of being light, apart from Lemma \ref{lemma43}. In fact:

\begin{enumerate}
 \item $c_0$ is a closed subspace of $c$, $c_0$ is light, but $c$ is not;
 \item $c$ is isometrically isomorphic to a closed subspace of $C[0,1]$, $c$ is not light but $C[0,1]$ is light.
\end{enumerate}
\end{remark}

Corollary \ref{reciproque} gives us a positive answer to Question \ref{ques31}. On the other hand, Remark \ref{remark24} suggests the following new question:

\begin{ques}
\label{ques516}
 Does there exist a Banach space $X$ and an orbitwise light group $G \leqslant GL(X)$ such that $X$ admits no $G$-invariant LUR renorming?
\end{ques}

The next proposition shows that the isometry group of $C[0,1]$ also gives a positive answer to Question \ref{ques516}:

\begin{prop}\label{Propenplus}
 The group $\Isom(C[0,1])$ is orbitwise light.
\end{prop}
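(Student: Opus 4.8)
The plan is to reduce everything, via the Banach--Stone theorem, to an elementary statement about reparametrisations of a fixed function, and then to capture the limit of the reparametrising homeomorphisms by a compactness argument. Since $[0,1]$ is connected, every element of $\Isom(C[0,1])$ has the form $f\mapsto\varepsilon\,f\circ\varphi$ with $\varepsilon\in\{-1,1\}$ and $\varphi\in\mathrm{Homeo}([0,1])$, so the orbit of a fixed $f\in C[0,1]$ is $\mathcal O(f)=\{\varepsilon\,f\circ\varphi\}$. To prove that WOT and SOT agree on $\mathcal O(f)$ it is enough, since SOT is always the finer topology, to show that a net in $\mathcal O(f)$ converging WOT to a point of $\mathcal O(f)$ converges in norm. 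Composing with the inverse of an isometry sending the limit to $f$ (such a map is WOT--WOT continuous and preserves both the norm and the orbit), I may assume the limit is $f$ itself, and write the net as $\varepsilon_\alpha\,f\circ\varphi_\alpha$. Testing WOT convergence against Dirac masses gives $\varepsilon_\alpha f(\varphi_\alpha(x))\to f(x)$ for every $x$, while the net is bounded by $\|f\|_\infty$. The goal is to upgrade this pointwise convergence to uniform convergence. (If $f$ is constant the orbit is finite and there is nothing to prove.)

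Second, I would reduce to a statement about a monotone limit. It suffices to show that every subnet has a further subnet converging in norm to $f$. Given a subnet, Tychonoff's theorem (the $\varphi_\alpha$ live in the compact space $[0,1]^{[0,1]}$) yields a further subnet along which $\varphi_\alpha\to\psi$ pointwise and $\varepsilon_\alpha\to\varepsilon\in\{-1,1\}$; replacing $f$ by $f(1-\cdot)$ if necessary (a reflection, hence an isometry), I may assume all $\varphi_\alpha$, and therefore $\psi$, are nondecreasing, with $\psi(0)=0$ and $\psi(1)=1$. Passing to the limit in the pointwise relation, together with the continuity of $f$, gives $\varepsilon\,f\circ\psi=f$. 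In particular, at any jump of $\psi$ with one-sided limits $a<b$, continuity of $f$ forces $f(a)=f(b)$.

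Third, the crux: the monotone limit $\psi$ can jump only across intervals on which $f$ is constant. Here I use that each $\varphi_\alpha$ is a homeomorphism, so $\varepsilon_\alpha f\circ\varphi_\alpha$ is an exact reparametrisation of $\pm f$ and hence has the same total variation $\mathrm{Var}(f)$ as $f$; passing to the monotone limit $\psi$ can only discard the variation that $f$ carries on the jump gaps. Concretely, for a jump with gap $(a,b)$ one has $\mathrm{Var}(f\circ\psi)\le \mathrm{Var}(f)-\mathrm{Var}(f;[a,b])$, because the values of $\psi$ avoid $(a,b)$ and $f(a)=f(b)$; since $\varepsilon f\circ\psi=f$ has variation $\mathrm{Var}(f)$, this forces $\mathrm{Var}(f;[a,b])=0$, i.e.\ $f$ is constant on $[a,b]$. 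I expect this rigidity step to be the main obstacle: its content is precisely that an increasing reparametrisation cannot sweep through an oscillation of $f$ and still match $f$ pointwise on both sides once the sweep collapses to a point, and the variation bookkeeping must be handled with care when $f$ has unbounded variation---this can be done by truncating $f$ to a narrow band around the putative excursion, so that only a finite, genuinely lost amount of variation is compared.

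Finally, uniform convergence. If $\varepsilon_\alpha f\circ\varphi_\alpha$ did not converge uniformly to $f$ along the subnet, a further subnet would produce points $y_\alpha\to y_*$ with $|\varepsilon_\alpha f(\varphi_\alpha(y_\alpha))-f(y_\alpha)|\ge\delta$; passing to a subnet with $\varphi_\alpha(y_\alpha)\to z_*$, the monotonicity of $\varphi_\alpha$ together with $\varphi_\alpha\to\psi$ traps $z_*\in[\psi(y_*^-),\psi(y_*^+)]$, and in the limit $|\varepsilon f(z_*)-f(y_*)|\ge\delta$. At a continuity point of $\psi$ this says $z_*=\psi(y_*)$, so $\varepsilon f(z_*)=\varepsilon f(\psi(y_*))=f(y_*)$, a contradiction; at a jump of $\psi$ the interval $[\psi(y_*^-),\psi(y_*^+)]$ is a constancy interval of $f$ by the previous step, so again $\varepsilon f(z_*)=f(y_*)$, a contradiction. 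Hence the subnet converges in norm to $f$, and since every subnet has such a further subnet, the whole net converges SOT to $f$. Thus WOT and SOT coincide on every orbit, and $\Isom(C[0,1])$ is orbitwise light.
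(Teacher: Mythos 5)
Your overall architecture is sound and genuinely different from the paper's: Banach--Stone plus Dirac masses to get pointwise convergence, Tychonoff to extract a monotone pointwise limit $\psi$ of the reparametrisations, and a clean two-case endgame (continuity point of $\psi$ versus jump of $\psi$) that correctly reduces everything to your ``crux'': \emph{$\psi$ can jump only across intervals on which $f$ is constant}. The endgame itself is fine: the trapping $z_*\in[\psi(y_*^-),\psi(y_*^+)]$ follows from monotonicity, and $\varepsilon f(a)=f(y_*)=\varepsilon f(b)$ at a jump gap $(a,b)$ follows from continuity of $f$ and the limit identity $\varepsilon\, f\circ\psi=f$. But the justification of the crux has a genuine gap. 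The variation bookkeeping $\mathrm{Var}(f\circ\psi)\le \mathrm{Var}(f)-\mathrm{Var}(f;[a,b])$ only forces $\mathrm{Var}(f;[a,b])=0$ when $\mathrm{Var}(f)<\infty$, and your proposed repair --- truncating $f$ to a narrow band around the value $m=f(a)=f(b)$ --- does not restore finiteness: clipping $f$ to $[m-\lambda,m+\lambda]$ kills oscillations \emph{outside} the band but leaves untouched any oscillations of $f$ \emph{inside} the band elsewhere in $[0,1]$, and these can carry infinite variation. Worse, for a nowhere-BV function (e.g.\ of Weierstrass type, which has infinite variation on every subinterval, and whose level sets are infinite for almost every level) every global comparison of variations or of level-crossing counts reads $\infty\le\infty$, so no bookkeeping of this kind --- clipped, localized, or via the Banach indicatrix --- can extract the conclusion. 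Since such $f$ are dense in $C[0,1]$, this is not a marginal case.

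The gap is repairable, but by a different mechanism than variation counting. The paper's proof is instructive here: assuming pointwise but not uniform convergence, it uses pointwise convergence at the \emph{finitely many} points already constructed to push them, stage by stage, past a fixed threshold $x-\delta$ while preserving their $f$-values up to geometrically decaying errors, appending at each stage a new point where $f$ is far from $f(x)$. After $n$ stages one has $2n-1$ points in the fixed interval $[0,x-\delta]$ at which the values of $f$ alternate between $\varepsilon/4$-close to $f(x)$ and $3\varepsilon/4$-far from it; by pigeonhole two consecutive such points are within $1/(2n-2)$ of each other with $|f|$-difference exceeding $\varepsilon/2$, contradicting \emph{uniform continuity} of $f$ --- a quantitative oscillation count that is immune to $\mathrm{Var}(f)=\infty$. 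To salvage your route you would need to replace the variation inequality by an argument of this kind (note that surjectivity of the $\varphi_\alpha$ already gives you preimages $w_\alpha=\varphi_\alpha^{-1}(c)\to y_0$ for any $c\in(a,b)$ with $f(c)\neq m$, which is the seed of such an induction), or by an iteration/self-similarity argument showing that the functional equation $\varepsilon f\circ\psi = f$ replicates the values $f([a,b])$ in every neighborhood of an accumulation point of the $\psi$-orbits, contradicting continuity there. As written, the crux is asserted rather than proved, and the suggested fix fails, so the proposal is incomplete at precisely the step you yourself flagged as the main obstacle.
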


\begin{proof}
 Let $f \in C[0,1]$ and let $(g_{\alpha})_{\alpha \in I}$ be a net in the orbit $\mathcal{O}(f)$ of $f$ under the action of the group $\Isom(C[0,1])$ such that $g_{\alpha}$ converges weakly to $g \in \mathcal{O}(f)$. By the Banach-Stone Theorem, there exist homeomorphisms $\varphi, \varphi_{\alpha} \in \Hom([0,1])$ and $\epsilon, \epsilon_{\alpha} \in \{-1,1\}$ such that $g = \epsilon \cdot f \circ \varphi$ and $g_{\alpha} = \epsilon_{\alpha} \cdot f \circ \varphi_{\alpha}$. Since $g_{\alpha}$ converges weakly to $g$ (hence, pointwise), we can assume that the $\varphi_{\alpha}$ are increasing homeomorphisms, $\epsilon = \epsilon_{\alpha} = 1$ for every $\alpha \in I$ and $g=f$.
 \par\smallskip
 Suppose by contradiction that $f \circ \varphi_{\alpha}$ does not converge uniformly to $f$. Then we can assume that there exists $\varepsilon > 0$ and for every $\alpha \in I$ there exists $x_{\alpha} \in [0,1]$ such that $|f(\varphi_{\alpha}(x_{\alpha})) - f(x_{\alpha})| > 2\varepsilon.$ We also can assume that $x_{\alpha} \to x \in [0,1]$ and $x_{\alpha} \leq x$ for every $\alpha$. Then by the continuity of $f$ at the point $x$,
$$|f(\varphi_{\alpha}(x_{\alpha})) - f(x)| > \varepsilon.$$
 Let $\delta > 0$ be such that $|x-y|<\delta \implies |f(x)-f(y)| < \dfrac{\varepsilon}{8}.$
 Then $\varphi_{\alpha}(x_{\alpha}) \not \in (x-\delta,x+\delta)$ for every $\alpha$, and $\varphi_{\alpha}(x_{\alpha}) < x-\delta$  for infinitely many indices $\alpha \in I$, or $\varphi_{\alpha}(x_{\alpha}) > x + \delta$  for infinitely many indices $\alpha \in I$. Without loss of generality, we may assume that
 $$\varphi_{\alpha}(x_{\alpha}) < x-\delta, \quad \text{ for every } \alpha \in I.$$
 We also may assume that
 $$x-\delta < x_{\alpha} \leq x, \quad \text{ for every } \alpha \in I$$ 
 (the cases $\varphi_{\alpha}(x_{\alpha}) > x+\delta$ and/or $x < x_{\alpha} < x+\delta$ for every $\alpha \in I$ are similar).
 \par\smallskip
 Let $\alpha_1 \in I$ and let
 $$y_{1,1} = \varphi_{\alpha_1}(x_{\alpha_1}).$$
 We claim that for every $n \geq 2$, there exists a finite sequence in $[0,1]$,
 $$y_{n,1} < y_{n,2} < \dots < y_{n,2n-1} < x-\delta$$
 such that
 \begin{enumerate}[a)]
  \item $|f(y_{n,2k+1})-f(x)| > \varepsilon - \dfrac{\varepsilon}{8} - \dfrac{\varepsilon}{2^{k+4}}\left(\displaystyle \sum_{j=0}^{n-k-2}\dfrac{1}{2^j}\right) > \dfrac{3\varepsilon}{4}$ \quad for $k = 0, 1, \dots, n-1$\\
 and
  \item $|f(y_{n,2k})-f(x)| < \dfrac{\varepsilon}{8} + \dfrac{\varepsilon}{2^{k+3}}\left(\displaystyle \sum_{j=0}^{n-k-1}\dfrac{1}{2^j}\right) < \dfrac{\varepsilon}{4}$\quad  for $k = 1, 2, \dots, n-1$.
  \end{enumerate}
Notice that the existence of such a sequence for every $n \ge 2$ contradicts the uniform continuity of $f$ on $[0,1]$. Hence it suffices to prove the claim in order to complete the proof of Proposition \ref{Propenplus}.
\par\smallskip
We proceed to the proof of the claim by induction. Since $f\circ \varphi_\alpha$ converges pointwise to $f$ and $x_\alpha \to x$, we can take $\alpha_2 \succcurlyeq \alpha_1$ such that $x_{\alpha_1} < x_{\alpha_2} < x$, $|f(\varphi_{\beta}(y_{1,1}))-f(y_{1,1})| < \dfrac{\varepsilon}{16}$ and $|f(\varphi_{\beta}(x_{\alpha_1}))-f(x_{\alpha_1})| < \dfrac{\varepsilon}{16}$ for every $\beta \succcurlyeq \alpha_2$. Let
 $$y_{2,1} = \varphi_{\alpha_2}(y_{1,1}), \quad y_{2,2} = \varphi_{\alpha_2}(x_{\alpha_1}) \quad \text{ and } \quad y_{2,3} = \varphi_{\alpha_2}(x_{\alpha_2}).$$
  Since $\varphi_{\alpha_2}$ is an increasing homeomorphism and $y_{1,1} < x-\delta < x_{\alpha_1} < x_{\alpha_2}$, we have $y_{2,1} < y_{2,2} < y_{2,3}$ and $y_{2,3} = \varphi_{\alpha_2}(x_{\alpha_2}) < x-\delta$. Moreover,
 $$|f(y_{2,1})-f(x)| > \varepsilon - \dfrac{\varepsilon}{8} - \dfrac{\varepsilon}{16}, \quad |f(y_{2,2})-f(x)| < \dfrac{\varepsilon}{8} + \dfrac{\varepsilon}{16} \text{ and } |f(y_{2,3})-f(x)| > \varepsilon - \dfrac{\varepsilon}{8},$$
which proves the inequalities for $n=2$.
\par\smallskip
Suppose now that the inequalities hold for $n$. Let $\alpha_{n+1} \succcurlyeq \alpha_n$ such that $x_{\alpha_n} < x_{\alpha_{n+1}} < x$, $|f(\varphi_{\beta}(y_{n,r}))-f(y_{n,r})| < \dfrac{\varepsilon}{2^{n+3}}$ and $|f(\varphi_{\beta}(x_{\alpha_n}))-f(x_{\alpha_n})| < \dfrac{\varepsilon}{2^{n+3}}$ for every $r = 1, 2, \dots, 2n-1$ and every $\beta \succcurlyeq \alpha_2$. Let
$$y_{n+1,r} = \varphi_{\alpha_{n+1}}(y_{n,r}) \quad \text{ for } r = 1, \dots, 2n-1,$$
$$y_{n+1,2n} = \varphi_{\alpha_{n+1}}(x_{\alpha_n}) \quad \text{ and } \quad y_{n+1,2n+1} = \varphi_{\alpha_{n+1}}(x_{\alpha_{n+1}}).$$
It follows that
\begin{eqnarray*}
 |f(y_{n+1,2k+1})-f(x)| & > & \varepsilon - \dfrac{\varepsilon}{8} - \dfrac{\varepsilon}{2^{k+4}}\left(\displaystyle \sum_{j=0}^{n-k-1}\dfrac{1}{2^j}\right) \text{ for } k = 0, 1, \dots, n \quad \text{ and }\\
 |f(y_{n+1,2k})-f(x)| & < & \dfrac{\varepsilon}{8} + \dfrac{\varepsilon}{2^{k+3}}\left(\displaystyle \sum_{j=0}^{n-k}\dfrac{1}{2^j}\right) \text{ for } k = 1, 2, \dots, n.\\
\end{eqnarray*}
Since $\varphi_{\alpha_{n+1}}$ is an increasing homeomorphism, and $y_{n,1} < y_{n,2} < \dots < y_{n,2n-1} < x-\delta < x_{\alpha_n} < x_{\alpha_{n+1}}$, we have $y_{n+1,1} < y_{n+1,2} < \dots < y_{n+1,2n+1} = \varphi_{\alpha_{n+1}}(x_{\alpha_{n+1}}) < x-\delta$, which proves the claim.
\end{proof}

\par\medskip

\section{An example of a group with a discrete orbit but no distinguished point}
\label{distinguished}

In this section we solve a problem of \cite{FRdisplay}, mentioned in the introduction, by exhibiting an SOT-discrete group of isomorphisms of $c_0$ which admits no distinguished point. More generally, we show the following:

\begin{prop}
\label{prop5.1} For any integer $r \geq 2$, there exists a bounded infinite SOT-discrete group of isomorphisms of $c_0$ of the form $Id+F$, $F\in \emph{L}(c_0)$ of finite rank, admitting a distinguished family of cardinality $r$, but none of cardinality $r-1$.
\end{prop}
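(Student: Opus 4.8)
The plan is to construct, for each $r \geq 2$, an explicit group $G_r$ of the form $\Id + F$ acting on $c_0$, where the group structure is modelled on a free abelian group of rank $r$ acting by ``sign-flipping along arithmetic patterns'' encoded in blocks of coordinates. The core idea is to realize a distinguished family of cardinality exactly $r$ by arranging that $r$ independent generators require $r$ independent test vectors to separate them from $\Id$, while no $r-1$ vectors suffice. I would index $c_0$ by a countable set partitioned into blocks, and on each block let $G_r$ act through a faithful linear representation of $(\Z/2\Z)$-type or of $\Z^r$-type constructed so that the orbit of any finite family of vectors is separated from $\Id$ by at least $r$ ``directions'' but not fewer.

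\textbf{First} I would fix the group to be a concrete countable abelian group, most naturally a subgroup of $\{-1,1\}^{(\N)}$ (eventually-trivial sign sequences) or a quotient/subgroup built so that it has $\Z$-rank or $2$-rank equal to $r$. The operators act as $T_\varepsilon((x_k)) = (\varepsilon_k x_{\sigma(k)})$ on suitably chosen coordinates, each $T_\varepsilon$ differing from $\Id$ only in finitely many coordinates, so that automatically $T_\varepsilon = \Id + F$ with $F$ of finite rank, and $G_r$ is bounded (indeed isometric for the sup norm). \textbf{Next}, to show $G_r$ is SOT-discrete and admits a distinguished family of cardinality $r$, I would exhibit $r$ explicit vectors $x_1,\ldots,x_r \in c_0$ and verify directly that $\inf_{T \neq \Id}\max_{1\le i\le r}\|Tx_i - x_i\| > 0$; this uses that the action on these coordinates is faithful and that flipping any nontrivial group element moves at least one $x_i$ by a fixed amount. \textbf{Then} comes the delicate direction: I must prove that \emph{no} family of cardinality $r-1$ is distinguished, i.e. for every choice of $r-1$ vectors $y_1,\ldots,y_{r-1} \in c_0$ there exist group elements $T \neq \Id$ arbitrarily close to $\Id$ on all the $y_j$ simultaneously. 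The strategy here is a counting / linear-algebra pigeonhole argument: since the $y_j$ lie in $c_0$, their coordinates decay, so outside a finite set they are uniformly small; within the group's $r$ ``independent directions,'' any $r-1$ linear constraints coming from the $y_j$ leave a nontrivial subgroup free to act, and on that subgroup one can act by elements supported on high-index blocks where the $y_j$ are already tiny.

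\textbf{The hard part will be} the $r-1$ direction: making precise the claim that $r-1$ vectors can impose at most $r-1$ ``independent obstructions'' on a rank-$r$ abelian action, and combining this with the $c_0$-decay to produce nontrivial group elements that are SOT-small on the given family. I would handle this by a dimension-counting argument over $\Z/2\Z$ (if $G_r$ has $2$-rank $r$) or by exploiting that, given $r-1$ vectors and any $\epsilon > 0$, only finitely many coordinates of each $y_j$ exceed $\epsilon$, so restricting to sufficiently high blocks one has full freedom to activate a generator there; faithfulness on $r$ independent blocks then forces the rank-$r$ lower bound while the freedom on a single high block (needing only $1 \le r-1$ vectors worth of control, which fails precisely at the threshold) yields the negative result for cardinality $r-1$. \textbf{Finally}, I would assemble these facts: boundedness and the finite-rank form are immediate from the construction, SOT-discreteness follows from the existence of the distinguished family of cardinality $r$, and infiniteness of $G_r$ follows from choosing the group to be infinite (e.g. $\bigoplus_{\N}\Z/2\Z$ suitably restricted). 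The threshold between $r$ and $r-1$ is engineered by the number of independent blocks carrying the faithful action, so the whole proof reduces to verifying the two matching bounds on distinguished cardinality.
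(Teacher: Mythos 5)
Your proposal has the right abstract shape (an abelian bounded group of finite-rank perturbations of the identity, a distinguished $r$-family exhibited by hand, and a dimension-counting argument against $(r-1)$-families), but the concrete construction you commit to --- signed permutations and sign-flips acting on blocks of coordinates of $c_0$ --- provably cannot produce the threshold, and the two halves of your plan contradict each other. First, for a group of pure sign-flips $T_\varepsilon x=(\varepsilon_k x_k)_{k\in\N}$ the notion of distinguished family collapses to cardinality one: if $\{x_1,\dots,x_n\}$ is distinguished with constant $\delta$, set $y=(\max_{1\le i\le n}|x_i(k)|)_{k\in\N}\in c_0$; since $\|T_\varepsilon x_i-x_i\|_\infty=2\sup_{k:\varepsilon_k=-1}|x_i(k)|$, one gets $\|T_\varepsilon y-y\|_\infty=\max_i\|T_\varepsilon x_i-x_i\|_\infty\ge\delta$ for every $T_\varepsilon\neq\Id$, so $y$ is a distinguished \emph{point}. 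Hence no such group admits a distinguished family of cardinality $r\ge 2$ but none of cardinality $r-1$. Second, and more fundamentally, your mechanism for defeating $(r-1)$-families is to find nontrivial elements supported on arbitrarily high blocks, where the given vectors are uniformly small; but \emph{every} vector of $c_0$ has small tails, so the identical argument defeats every finite family, and since a group is SOT-discrete exactly when it admits a distinguished finite family, your group would fail to be SOT-discrete at all. For isometric coordinate actions, the displacement $\|Tx-x\|$ of a fixed $x\in c_0$ vanishes along elements supported near infinity; to create a sharp threshold at $r$ one needs displacements that do \emph{not} vanish at infinity and yet can be made simultaneously small on any prescribed $r-1$ vectors.

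This is precisely what the paper's construction supplies and your sketch lacks. The paper writes $c_0\simeq\ell_1^r\oplus_\infty c_0$ and takes generators $T_n(y,z)=(y,\,R_n y+U_n z)$, where $U_n$ flips the $n$-th coordinate of the tail and, crucially, $R_n:\ell_1^r\to c_0$, $R_n(y)=\phi_n(y)e_n$, is a rank-one map from the fixed $r$-dimensional head into the tail, with $(\phi_n)_{n\in\N}$ dense in the unit sphere of $\ell_\infty^r$. Since $T_n^2=\Id$ and the $T_n$ commute, every $T\neq\Id$ in the group carries the perturbation $\sum_{k\in F}R_k$ with $F$ finite nonempty, and it displaces $f_s\oplus 0$ by $\max_{k\in F}|\phi_k(f_s)|$, which is $\ge 1$ for some $s$ because each $\phi_k$ is normalized: the canonical basis of the head is a distinguished $r$-family even though the vectors $e_k$ carrying the perturbation escape to infinity. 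Conversely, the head components $y_1,\dots,y_{r-1}$ of any $r-1$ vectors span a subspace of dimension $<r$, hence admit a common norm-one annihilator $\phi\in\ell_\infty^r$; choosing $\phi_n\to\phi$ along a subsequence gives $T_n x_i\to x_i$ for all $i$ while $T_n\neq\Id$, so no $(r-1)$-family is distinguished. The $r$ versus $r-1$ dichotomy thus comes from linear algebra in the dual of a fixed $r$-dimensional summand paired against a dense set of functionals, not from counting independent blocks over $\Z/2\Z$ inside the group; note also that the resulting operators are not isometries but only $2$-bounded, which the statement permits and which is unavoidable given the obstructions above.
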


\begin{proof} Since $c_0 \simeq \ell_1^r \oplus_\infty c_0$ it is enough to define the group $G$ as an infinite bounded SOT-discrete group of isomorphisms on $\ell_1^r \oplus_\infty c_0$. 
\par\smallskip
Let $(e_n)_{n\in\N}$ be the canonical basis of $c_0$, and let $(U_n)_{n\in\N}$ be the sequence of isometries of $c_0$ defined by setting, for every $n,m\in\N$, $U_n(e_n)=-e_n$ and $U_n(e_m)=e_m$ whenever $m \neq n$.
Let $(\phi_n)_{n\in\N}$ be dense in the unit sphere of $\ell_\infty^r$, and define the rank-one operator
$R_n: \ell_1^r \rightarrow c_0$ by $R_n(x)=\phi_n(x)e_n$, $x\in\ell_1$. We then define an operator
$T_n$ on $\ell_1^r \oplus_\infty c_0$ in matrix form as
\[T_n=\begin{pmatrix} \Id & 0 \\ R_n & U_n \end{pmatrix}.\]
It is readily checked that $T_n^2=\Id$ for every $n\in\N$ and that for every distinct integers $n_1,\ldots,n_k$,
\[T_{n_1}\ldots T_{n_k}=\begin{pmatrix} \Id & 0 \\  R_{n_1}+\cdots+R_{n_k} &  U_{n_1}\ldots U_{n_k} \end{pmatrix}.\]
Therefore the group $G$ generated by the operators $T_n$ is abelian. Furthermore, since for every $x \in \ell_1^r$
$$\|(R_{n_1}+\cdots+R_{n_k})x\|=\|\phi_{n_1}(x)e_{n_1}+\cdots+\phi_{n_k}(x)e_{n_k}\| \leq \max_i |\phi_{n_i}(x)\|\,.\, \|x\|$$
it follows that $\|T_{n_1}+\cdots+T_{n_k}\| \le 2$, and thus $G$ is a bounded subgroup of ${\textrm{GL}}(\ell_1^r \oplus_\infty c_0)$.
\par\medskip
We claim that no family $\{x_1, \dots, x_{r-1}\}$ of $\ell_1^r \oplus c_0$ is distinguished for $G$.
Indeed, writing each vector $x_i$, $1\le i\le r-1$, as $(y_i,z_i)$ with $y_i \in \ell_1^r$ and $z_i \in c_0$, we note
that $U_n z_i\to z_i$ for every $1\le i\le r-1$. 
Since the vectors $y_1, \dots, y_{r-1}$ generate a subspace of dimension strictly less than $r$ of $\ell_1^r$, there exists a norm $1$ functional $\phi \in \ell_\infty^r$ such that $\phi(y_i)=0$ for every $1\le i\le r-1$. 
Let $D\subset \N$ be such that $\phi_n\to\phi$ in $\ell_{\infty}^{r}$ as $n$ tends to infinity along $D$. Then
$R_n(y_i)\to 0$ as $n$ tends to infinity along $D$, and therefore $ T_n(x_i)\to x_i$ as $n$ tends to infinity along $D$ for every $1\le i\le r-1$.
So the family $\{x_1,\ldots,x_{r-1}\}$ is not distinguished for $G$.
\par\medskip
On the other hand, if we denote by $(f_1,\dots, f_{r})$ the canonical basis  of $\ell_1^r$, then the family $\{f_1\oplus 0,\dots, f_{r}\oplus 0\}$ is distinguished for $G$. To check this, note that for any operator $T\in {\textrm{GL}}(\ell_1^r\oplus_\infty c_0)$ of the form
\[T=\begin{pmatrix} \Id & 0 \\ \sum_{k \in F} R_k & U \end{pmatrix},\]
where $F$ in a non-empty subset of $\N$, and $U$ is an isometry of $c_0$, we have
$$\|T(f_s\oplus 0)-f_s\oplus 0\|=\max_{k \in F}|\phi_k(f_s)|\quad\textrm{ for every }1 \le s \le r.$$
Since, for each $k \in F$, $\phi_k$ is normalized in $\ell_\infty^r$,
 $|\phi_k(f_s)| \geq 1$ for at least one index $s$.
It follows that $$\max_{1 \le s \le r}\|T(f_s\oplus 0)-f_s\oplus 0\| \geq 1,$$
and so 
$$\inf_{\substack{T \in G \\ T \neq \Id}} \{ \max_{1 \le s \le r} ||T(f_s\oplus 0) - f_s\oplus 0||\} \geq 1. $$
Hence $\{f_1, \dots, f_{r}\}$ is a distinguished family for $G$. 
\end{proof}

We immediately deduce

\begin{cor} The group of isomorphisms of $c_0$ constructed in the proof of Proposition \ref{prop5.1} is not light. \end{cor}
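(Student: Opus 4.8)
The plan is to exhibit a single sequence in $G$ that converges to $\Id$ in the WOT while failing to converge to $\Id$ in the SOT; this shows directly that the two operator topologies differ on $G$, so $G$ is not light. The natural candidate is the sequence $(T_n)_{n\in\N}$ of generators itself. Each $T_n$ satisfies $T_n \neq \Id$, and by Proposition \ref{prop5.1} the group $G$ admits the distinguished family $\{f_1\oplus 0,\dots,f_r\oplus 0\}$, hence is SOT-discrete; since $G$ is a group, this means that $\Id$ is an isolated point of $(G,\mathrm{SOT})$. Consequently no sequence of elements of $G\setminus\{\Id\}$ can converge to $\Id$ in the SOT. So if I can show that $T_n\stackrel{\text{WOT}}{\longrightarrow}\Id$, then lightness of $G$ would force $T_n\stackrel{\text{SOT}}{\longrightarrow}\Id$, which is impossible.

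It thus remains to verify the WOT convergence. First I would compute, for $w=(y,z)\in \ell_1^r\oplus_\infty c_0$, the difference $T_nw-w$. Using the matrix form of $T_n$ together with $R_ny=\phi_n(y)e_n$ and $U_nz=z-2z_ne_n$ (where $z_n$ is the $n$-th coordinate of $z$), one obtains $T_nw-w=\big(0,(\phi_n(y)-2z_n)e_n\big)$, a bounded multiple of the basis vector $e_n$. Next I would pair this with an arbitrary functional. Identifying the dual of $\ell_1^r\oplus_\infty c_0$ with $\ell_\infty^r\oplus_1\ell_1$, every functional is of the form $\Psi=(\psi,\eta)$ with $\psi\in\ell_\infty^r$ and $\eta=(\eta_k)_k\in\ell_1$, and
\[
\Psi(T_nw-w)=(\phi_n(y)-2z_n)\,\eta_n.
\]
Since $\|\phi_n\|=1$, the factor $|\phi_n(y)-2z_n|$ is bounded by $\|y\|+2\|z\|$ independently of $n$, while $\eta_n\to 0$ because $\eta\in\ell_1$. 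Hence $\Psi(T_nw-w)\to 0$ for every $w$ and every $\Psi$, which is precisely $T_n\stackrel{\text{WOT}}{\longrightarrow}\Id$, and the argument is complete.

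I do not expect a genuine obstacle here: the computation is elementary once the dual $\ell_\infty^r\oplus_1\ell_1$ is correctly identified, and the $\ell_1$-summability of the $c_0$-part $\eta$ of the functional is exactly what kills the travelling bump $(\phi_n(y)-2z_n)e_n$ in the weak limit. The only point requiring care is the bookkeeping with SOT-discreteness: one must use that, for a group, the existence of a distinguished finite family is equivalent to $\Id$ being SOT-isolated, so that the WOT-null sequence $(T_n)$ of non-identity elements cannot be SOT-null, thereby separating the WOT from the SOT on $G$.
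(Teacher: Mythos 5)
Your proof is correct, and its skeleton is the paper's: exhibit the generator sequence $(T_n)_{n\in\N}$ as WOT-convergent to $\Id$ but not SOT-convergent. The WOT half is the same argument in slightly more explicit form --- the paper notes that $R_n(x)=\phi_n(x)e_n$ tends weakly to $0$ in $c_0$ and that $U_n\stackrel{\text{WOT}}{\longrightarrow}\Id$, while you carry out the equivalent pairing directly, computing $T_nw-w=\bigl(0,(\phi_n(y)-2z_n)e_n\bigr)$ and testing against $\ell_\infty^r\oplus_1\ell_1$, with the $\ell_1$-summability of the $c_0$-part of the functional doing the work in both versions. The one genuine divergence is how SOT non-convergence is certified: the paper computes $\|T_n(x\oplus 0)-x\oplus 0\|=|\phi_n(x)|$ for $x\in\ell_1^r$ and uses the density of $(\phi_n)_{n\in\N}$ in the unit sphere of $\ell_\infty^r$ to see that this cannot tend to $0$ for $x\neq 0$, whereas you invoke the distinguished family $\{f_1\oplus 0,\dots,f_r\oplus 0\}$ already established in Proposition \ref{prop5.1}, which makes $\Id$ SOT-isolated in $G$, so that the non-identity elements $T_n$ (note $U_ne_n=-e_n$, so indeed $T_n\neq\Id$) cannot converge to it. Your route buys economy, reusing Proposition \ref{prop5.1} and dispensing with the density argument entirely; the paper's direct computation is self-contained and yields slightly more information, namely that $T_n(x\oplus 0)$ fails to converge in norm for \emph{every} nonzero $x\in\ell_1^r$, not only for the distinguished witnesses. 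Both arguments are complete and correct.
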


\begin{proof}
For every $x \in \ell_1^r$, the sequence
 $(R_n(x))_{n\in\N}$ tends weakly to $0$ in $c_0$. We also know that the sequence $(U_n)_{n\in\N}$ tends WOT to $\Id$. 
  Therefore $(T_n)_{n\in\N}$ also tends WOT to $\Id$. On the other hand, we have for every   $x \in \ell_1^r$ and every $n\in\N$ that
$$\|T_n(x\oplus 0)-x\oplus 0\| = \|R_n(x)\| =\|\phi_n(x)\|. $$
By the density of the sequence $(\phi_n)_{n\in\N}$ in the unit sphere of $\ell_\infty^r$,
this implies that the sequence $(T_n(x\oplus 0))_{n\in\N}$ does not tend  to $x$ in norm, and thus $(T_n)_{n\in\N}$ does not tend SOT to $\Id$.
\end{proof}

We have thus proved:

 \begin{cor}
 \label{cor63}
 There exists a bounded group $G$ of isomorphisms of $c_0$ which is infinite, not light, SOT-discrete, and does not admit a distinguished point.
 \end{cor}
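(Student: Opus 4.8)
The plan is to specialize Proposition \ref{prop5.1} to the case $r=2$ and then simply read off each of the four asserted properties. Taking $r=2$, Proposition \ref{prop5.1} furnishes a bounded infinite group $G$ of isomorphisms of $c_0$, of the form $\Id+F$ with $F\in\textrm{L}(c_0)$ of finite rank, which admits a distinguished family of cardinality $2$ but no distinguished family of cardinality $r-1=1$. The key observation is that a distinguished point is, by the definition given earlier, nothing other than a distinguished family of cardinality $1$: the condition $\inf_{T\neq\Id}\|Tx-x\|>0$ is exactly the case $n=1$ of $\inf_{T\neq\Id}\max_{1\le i\le n}\|Tx_i-x_i\|>0$. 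Hence the absence of a distinguished family of cardinality $1$ translates directly into the statement that $G$ admits no distinguished point.

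The remaining properties are immediate from the material already established. Boundedness and infinitude of $G$ are part of the conclusion of Proposition \ref{prop5.1}. That $G$ is SOT-discrete follows from the existence of a distinguished (finite) family, namely the one of cardinality $2$, using the remark following the definition of distinguished families that $G$ is SOT-discrete precisely when it admits a distinguished finite family of vectors. Finally, the Corollary immediately preceding the present statement shows that the very group constructed in the proof of Proposition \ref{prop5.1} is not light. Assembling these facts yields a bounded, infinite, SOT-discrete, non-light group of isomorphisms of $c_0$ that admits no distinguished point, which is exactly the assertion.

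I do not anticipate any genuine obstacle here: the statement is a synthesis of Proposition \ref{prop5.1} (taken with $r=2$) and the preceding Corollary, and the only point requiring a word of justification is the identification of ``distinguished point'' with ``distinguished family of cardinality one''. In particular, choosing $r=2$ rather than a larger value is what makes the forbidden family size drop to $1$, which is precisely the case needed to rule out a distinguished point while still retaining SOT-discreteness via the surviving distinguished family of cardinality $2$.
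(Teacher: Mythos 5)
Your proposal is correct and matches the paper's own argument, which is simply ``Take $r=2$ in Proposition \ref{prop5.1}'', with non-lightness supplied by the immediately preceding corollary about that same group. Your explicit identification of a distinguished point with a distinguished family of cardinality one, and of SOT-discreteness with the existence of a distinguished finite family, just spells out what the paper leaves implicit.
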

 
 \begin{proof}
  Take $r = 2$ in Proposition \ref{prop5.1}. 
 \end{proof}

\section{Quasi-normed spaces}

Although Megrelishvili has defined the concept of light group of isomorphisms only for locally convex spaces, we can extend the definition to quasi-normed spaces, even if these spaces are not locally convex. One could ask if there is a general answer for the isometry groups of non-locally convex spaces, in terms of being light. The spaces $\ell_p$ and $L_p[0,1]$, $0<p<1$, are examples that give a negative answer to this question.
\par\smallskip
Recall that for $0 < p < 1$, $(L_p[0,1])^* = \{0\}$, i.e., the only linear continuous functional $f:L_p[0,1] \to \R$ is the constant function $f \equiv 0$ (see \cite[p. 18]{KPR}). Considering the  sequence $(T_n)_{n\in\N}$ constantly equal to $-\Id$, we observe that $T_n \stackrel{\text{SOT}}{\centernot \longrightarrow} \Id$ while $T_n \stackrel{\text{WOT}}{\longrightarrow} \Id$. So $L_p[0,1]$ is trivially non-light for every $0 < p < 1$. On the other hand,

\begin{prop}
For $0 < p < 1$, the space $\ell_p$ is light.
\end{prop}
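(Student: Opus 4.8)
The plan is to mimic the argument given for $c_0$ in Example~\ref{faitc_0}, exploiting the fact that for $0<p<1$ the isometries of $\ell_p$ have exactly the same rigid combinatorial form as those of $c_0$. First I would recall (or establish) the description of the isometry group: every surjective linear isometry $T$ of $\ell_p$, $0<p<1$, has the form
$$T\big((x_k)_{k\in\N}\big)=(\varepsilon_k x_{\sigma(k)})_{k\in\N},\quad (x_k)_{k\in\N}\in\ell_p,$$
where $(\varepsilon_k)_{k\in\N}\in\{-1,1\}^\N$ and $\sigma$ is a permutation of $\N$. This is the crucial structural input; it is the exact analogue of the form used for $c_0$, and it holds because the quasi-norm $\sum_k|x_k|^p$ forces isometries to permute the coordinate axes up to sign.

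Granting this, the argument is almost a verbatim repeat of the $c_0$ computation. Let $(T_\alpha)$ be a net in $\Isom(\ell_p)$ with $T_\alpha\stackrel{\text{WOT}}{\longrightarrow}\Id$, and write each $T_\alpha$ via signs $(\varepsilon_{\alpha,k})_k$ and a permutation $\sigma_\alpha$. Denoting by $\varphi_i$ the $i$th coordinate functional, WOT convergence gives, for every finitely supported $x$ and every $i$,
$$|\varphi_i(T_\alpha x)-\varphi_i(x)|=|\varepsilon_{\alpha,i}x_{\sigma_\alpha(i)}-x_i|\to 0.$$
Testing against $x=e_i$ forces $\sigma_\alpha(i)=i$ and $\varepsilon_{\alpha,i}=1$ eventually, exactly as in the $c_0$ case. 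Hence $T_\alpha x\to x$ in quasi-norm for every $x$ with finite support, and by the density of $c_{00}$ in $\ell_p$ together with the equicontinuity coming from the isometry property, $T_\alpha\stackrel{\text{SOT}}{\longrightarrow}\Id$. I would note here that one must be slightly careful because $\ell_p$ is only a quasi-normed space: the triangle inequality holds only up to a constant (or, more conveniently, one uses that $d(x,y)=\sum_k|x_k-y_k|^p$ is a genuine translation-invariant metric for $0<p<1$). Working with this metric $d$ makes the density/approximation step clean, since isometries preserve $d$ exactly.

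The main obstacle I anticipate is not the convergence bookkeeping but justifying the form of the isometries and confirming that WOT even makes sense here. Since $(\ell_p)^*$ is nontrivial for $0<p<1$ (unlike $L_p[0,1]$, whose dual is trivial, which is precisely why $L_p$ was handled separately above), the coordinate functionals $\varphi_i$ are continuous and the WOT is a meaningful Hausdorff topology, so the definition of lightness applies. Thus the only genuinely nontrivial ingredient is the isometry classification; once that is in hand, the proof reduces to the elementary coordinatewise argument above, and the role of WOT convergence against the functionals $\varphi_i$ is identical to the locally convex setting. The conclusion is that the WOT and SOT coincide on $\Isom(\ell_p)$, so $\ell_p$ is light.
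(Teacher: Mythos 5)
Your proof is correct and takes essentially the same route as the paper's: both rest on the sign-permutation form of the isometries of $\ell_p$ (which the paper likewise only cites, noting the argument is analogous to Banach's classification for $p>1$, $p\neq 2$) and on testing WOT convergence against the coordinate functionals coming from $(\ell_p)^*\simeq\ell_\infty$, which as you note makes the WOT meaningful here. The only difference is organizational: the paper argues by contradiction, fixing an index $N$ with small tail $\sum_{k>N}|x_k|^p$ and truncating, whereas you argue directly that finitely supported vectors are eventually fixed by the net and conclude by density of $c_{00}$ together with invariance of the metric $d(x,y)=\sum_k|x_k-y_k|^p$ --- a valid and, if anything, cleaner rendition of the same idea.
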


\begin{proof}
 Let $0 < p < 1$, and let $(T_{\alpha})_{\alpha \in I}$ be a net in $\Isom(\ell_p)$ such that $T_{\alpha} \stackrel{\text{WOT}}{\longrightarrow} \Id$. Each $T_{\alpha}$ acts on vectors $(x_n)_{n\in\N}\in \ell_p$ as $T_{\alpha}((x_n)_{n\in\N}) = (\varepsilon_n^{(\alpha)} x_{\sigma_{\alpha}(n)})_{n\in\N}$, where $\sigma_{\alpha}$ is a permutation of $\N$ and $(\varepsilon^{(\alpha)}_n)_{n\in\N}$ is a sequence of elements of $\{-1,1\}$ (the proof of this fact is similar of the case where $p > 1$ and $ p \neq 2$, found in \cite[p. 178]{banach}). Assume, by contradiction, that $T_{\alpha} \stackrel{\text{SOT}}{\centernot \longrightarrow} \Id$. Then there exist $x \in \ell_p$, $\varepsilon > 0$ and an infinite sequence $(\alpha_i)_{i\in\N} $ of indices in $I$ such that $\|T_{\alpha_i}x - x\|_p^p > \varepsilon$ for every $i \in\N$. Since $x \in \ell_p$, there exists $N \in \N$ such that $ \sum_{k=N+1}^{\infty}|x_k|^p < {\varepsilon}/{2}$.
 The dual space of $\ell_p$ identifies isomorphically with $\ell_{\infty}$, where $\Phi=(y_k)_{k\in\N}$ acts on an element $x = (x_k)_{k\in\N} \in \ell_p$  by the formula $\Phi(x) = \sum_{k\in\N}y_kx_k$ (see \cite[p. 21]{KPR}). Considering for $1 \le j \le N$ the functionals $\Phi_j$ identified with the vectors of the canonical basis $e_j \in \ell_{\infty}$, as well as the vectors $e_k \in \ell_p$ for $1 \le k \le N$, we obtain by the WOT convergence of $T_{\alpha}$ to $\Id$ that
 $$\Phi_j(T_{\alpha}(e_k)) - \Phi_j(e_k) = \varepsilon_j^{(\alpha)} \delta_{\sigma_{\alpha}(k),j} - \delta_{k,j} \to 0,$$
 where
 $\delta_{k,j} =  1  $ if $k = j$ and $\delta_{k,j} = 0$  if $k \neq j$.
 In particular, $\varepsilon_k^{(\alpha)} \delta_{\sigma_{\alpha}(k),k} \to 1$ for every $1 \le k \le N$. So we may assume that the permutations $\sigma_{\alpha}$ fix the first $N$ integers and that $\varepsilon_k^{(\alpha)} = 1$ for every $1 \le k \le N$. Hence we have for every $i\in\N$
 $$\sum_{k=N+1}^{\infty}|(T_{\alpha_i}(x))_k|^p = \sum_{k=N+1}^{\infty}|x_k|^p < \dfrac{\varepsilon}{2}\cdot$$
 However, taking $z = (z_k)_{k\in\N} \in \ell_p$ defined by
 $z_k = 0$ if $1 \le k \le N$ and $z_{k}=1$ if
$ k > N$,
 we have $\|z\|_p^p = \sum_{k=N+1}^{\infty}|x_k|^p < {\varepsilon}/{2}$ and
 $$\|T_{\alpha_i}x - x\|_p^p = \|T_{\alpha_i}z - z\|_p^p \le \|T_{\alpha_i}z\|_p^p + \|z\|_p^p = \varepsilon$$
 for every $i\in\N$, which is a contradiction.
 \end{proof}

\par\medskip
We finish the paper with a few related questions or comments.

\section{Questions and comments}

Our first question concerns renormings of the space $c$. Since it $c$ is not light, as proved in Proposition \ref{sophie}, it does not admit any isometry invariant LUR renorming. But it may still admit an isometry invariant strictly convex renorming.

\begin{ques}
 Does $c$ admits an isometry invariant strictly convex renorming?
\end{ques}

We have observed in Section \ref{lightisometrygroups} that if the isometry group $\Isom(X)$ of a Banach space $X$ of dimension $>1$ acts almost transitively on $S_X$ and is SOT-discrete, it is not abelian.

\begin{ques}
Suppose $X$ is a separable Banach space of dimension $>1$ and $G\leqslant {\rm Isom}(X)$ is an SOT-discrete amenable subgroup. Can $G$ have a dense orbit on $S_X$?
\end{ques}

We have seen in Corollary \ref{cor63} that  there exists a bounded group $G$ of isomorphisms of $c_0$ which is infinite, not light, SOT-discrete, and does not admit a distinguished point. One may wonder about the role of the space $c_0$ in this construction. For example, one can ask:

\begin{ques}
Does there exist a reflexive space $X$ with an SOT-discrete bounded group $G\leqslant {\emph{GL}}(X)$ that does not admit a distinguished point?
\end{ques}

Of course such a group $G$, if it exists, must be light, as all reflexive spaces are light. Noting that the example of Proposition \ref{propc}  is a group of finite rank perturbations of the identity on the space $c_0$, a question in the same vein  is:

\begin{ques} Does there exist a reflexive space $X$ with an SOT-discrete infinite bounded group
$G\leqslant {\emph{GL}}(X)$ such that all elements of $G$ are finite rank perturbations of the identity? \end{ques}

This question is relevant to \cite{FR}, where isometry groups on complex, reflexive, separable, hereditarily indecomposable spaces are studied. A negative answer to this question would imply that all isometry groups on such spaces act almost trivially, i.e., there would exist an isometry invariant decomposition $F \oplus H$ of the space where $F$ is finite dimensional and all elements of the group act as multiple of the identity on $H$, \cite{FR} Theorem 6.9.
    
\par\medskip
Another natural space which could be investigated in this context is the universal space of Gurarij, whose isometry group possesses a very rich structure (see \cite{Gur} for its definition and \cite{kub} for a recent survey).

\begin{ques}
Is the isometry group of the Gurarij space light?
\end{ques}

Finally, whether the converse to Megreleshvili's result holds remains an open question:

\begin{ques}
Does a Banach space $X$ have the PCP if and only if all bounded subgroups of $GL(X)$ are light?
\end{ques}

The answer is positive when $X$ has an unconditional basis: this follows from Theorem~\ref{th45}, the fact that an unconditional basis whose span does not contain $c_0$ must be boundedly complete, and the fact that separable dual spaces have the RNP and therefore the PCP.

\section*{Acknowledgements}

The authors would like to thank the anonymous referee for the corrections and many valuable suggestions which helped to improve the manuscript.

\end{document}